\documentclass[11pt]{article}

\usepackage[margin=1.1in]{geometry}
\usepackage{amsmath,amssymb,amsthm}
\usepackage{tikz}
\usetikzlibrary{arrows.meta, positioning, calc}
\usepackage[colorlinks=true,linkcolor=blue,citecolor=blue,urlcolor=blue]{hyperref}
\hypersetup{
  pdftitle={Hecke algebra representations from the Katz--Long--Moody construction},
  pdfauthor={Haru Negami},
  pdfkeywords={braid group; Iwahori--Hecke algebra; Temperley--Lieb algebra},
  pdfsubject={20F36, 20C08}
}

\newtheorem{theorem}{Theorem}[section]
\newtheorem{proposition}[theorem]{Proposition}
\newtheorem{lemma}[theorem]{Lemma}
\newtheorem{corollary}[theorem]{Corollary}

\newtheorem{problem}[theorem]{Problem}

\theoremstyle{definition}
\newtheorem{definition}[theorem]{Definition}
\newtheorem{example}[theorem]{Example}

\theoremstyle{remark}
\newtheorem{remark}[theorem]{Remark}

\newcommand{\GL}{\mathrm{GL}}
\newcommand{\Ker}{\operatorname{Ker}}
\newcommand{\spec}{\operatorname{spec}}
\newcommand{\kk}{\boldsymbol{k}}
\newcommand{\rLM}{\rho^{\mathrm{LM}}}
\newcommand{\rLMl}{\rho^{\mathrm{LM}}_{\lambda}}
\newcommand{\rKLM}{\rho^{\mathrm{KLM}}_{\lambda}}
\newcommand{\rDR}{\rho^{\mathrm{DR}}_{\lambda}}
\newcommand{\Nil}{\mathcal{N}}

\title{Hecke algebra representations from the\\ Katz--Long--Moody construction}

\author{Haru NEGAMI\thanks{Chiba University,
1-33 Yayoi-cho, Inage-ku, Chiba-shi, Chiba, Japan.
E-mail: \texttt{negamiharu@gmail.com}}}

\date{}

\begin{document}
\maketitle

\begin{abstract}
The Katz--Long--Moody (KLM) construction produces, from a representation
$\rho$ of the semidirect product $F_n \rtimes B_n$ of a free group and
the braid group and a parameter $\lambda$, a new representation on a
canonical quotient of $V^{\oplus n}$; it unifies the Long--Moody
construction with the multiplicative middle convolution for
Knizhnik--Zamolodchikov (KZ)-type equations. Writing $g_i = \rho(x_i)$ and $s_i = \rho(\sigma_i)$, we
prove that the intermediate representation $\rLMl$ satisfies
$\det\bigl(z I - \rLMl(\sigma_i)\bigr)
= \det(z I - s_i)^{\,n-1}\det(z I + s_i g_i)$; in particular, the
spectrum of the braid generators is independent of $\lambda$ and of the
twist $g_{i+1} g_i^{-1}$. In the rank-one case the construction produces
the unreduced and reduced Burau representations (at generic and
resonant parameters, respectively), for $t \neq 1$ and $t^n \neq 1$;
here and throughout, the identification with the Burau representation
concerns the restriction to $B_n$. For arbitrary input we compute the action on
the subspace removed in the quotient, obtain the characteristic
polynomial of the KLM braid generators as an exact quotient of
characteristic polynomials, and give a projection criterion for
semisimplicity of the intermediate Long--Moody operator. Over an
algebraically closed field of characteristic zero, for scalar braid part
and semisimple $g$, we classify at every $\lambda \neq 1$ when the
output has a quadratic minimal polynomial
with two distinct roots: for $n \geq 3$, exactly when
$(g - I)(g - \mu I) = 0$ for some $\mu \notin \{1, -1\}$ occurring in
$\spec(g)$, independently of $\lambda$; for $n = 2$, one further
double-resonant family appears. Semisimplicity of $g$ is a genuine
hypothesis: we exhibit non-semisimple inputs, namely $g = I + \Nil$ with
$\Nil \neq 0$ and $\Nil^2 = 0$, whose KLM quotient is nonetheless a
Hecke module,
realizing the permutation representation of the symmetric group. The
Hecke modules of the classification factor through a Temperley--Lieb
algebra, and for general input we give an exact three-strand criterion
for Temperley--Lieb factorization.
\end{abstract}

\noindent\textbf{Keywords:} braid group; Iwahori--Hecke algebra;
Temperley--Lieb algebra

\smallskip
\noindent\textbf{MSC Classification:} 20F36, 20C08

\section{Introduction}\label{sec:intro}

Representations of the braid group $B_n$ in which the images of the standard
generators $\sigma_1, \dots, \sigma_{n-1}$ satisfy a quadratic relation
$(\rho(\sigma_i) - a)(\rho(\sigma_i) - b) = 0$, $a, b \in \kk^{\times}$,
occupy a special place in low-dimensional topology and mathematical physics:
they are precisely the representations that factor through the
Iwahori--Hecke algebra of type $A_{n-1}$, and through them one obtains the
HOMFLY-PT invariants of links \cite{Jones87}, as well as large families of
unitary braid representations relevant to topological quantum computation
\cite{NSSFD08, DRW16}. The prototype is the Burau representation
\cite{Burau36}, whose generators have the two eigenvalues $1$ and $-t$.

There is a substantial topological literature on constructing Hecke
algebra representations. Lawrence realized Hecke algebra modules by
twisted homology and monodromy of configuration spaces
\cite{Lawrence90, Lawrence93}, obtaining the representations attached to
two-row Young diagrams, and extended the construction to representations of the Iwahori--Hecke
algebra associated with arbitrary multi-row Young diagrams
\cite{Lawrence96}. Bigelow
subsequently gave a homological construction of Iwahori--Hecke modules
and formulated a corresponding realization problem for the irreducible
ones \cite{Bigelow04}. Most directly relevant to the present paper,
Bigelow and Tian developed several generalizations of the Long--Moody
construction \cite{BigelowTian08}: they extended it to subgroups of
$B_n$, recovering the Gassner representation of the pure braid group;
they introduced a Lawrence-type construction based on mixed braid
groups $B_{n,m}$ and proved that a quadratic Hecke relation satisfied by
the input representation of $B_m$ is inherited by the output
\cite[Theorem 5.4]{BigelowTian08}; they observed a universal quadratic
relation for the Long--Moody matrices over the noncommutative group ring
$\mathbb{Z}[F_n \rtimes B_n]$ \cite[Proposition 5.1]{BigelowTian08}; and
they gave an explicit reduced version of the construction. Thus
\cite{BigelowTian08} already provides methods for producing Hecke
algebra representations by generalized Long--Moody procedures.
Functorial and categorical extensions of the Long--Moody construction
have been developed by Souli\'e \cite{Soulie19}.

The problem considered here is different and complementary. Starting
from the semidirect-product input
$\rho \colon F_n \rtimes B_n \to \GL(V)$ defined by the Artin action
(recalled in Section~\ref{sec:prelim}), we ask for \emph{intrinsic}
conditions on the matrices $g_i := \rho(x_i)$ and $s_i := \rho(\sigma_i)$
under which the output of the Katz--Long--Moody construction is of Hecke
type. In particular, we do not assume that an auxiliary input
representation already satisfies a Hecke relation; our aim is to detect
the quadratic relation directly from the spectral and Jordan-theoretic
data of the given representation, and to track the effect of the
quotient defining the construction.

In \cite{HiroeNegami23}, K.~Hiroe and the author introduced the
Katz--Long--Moody (KLM) construction, which unifies the Long--Moody
construction of braid group representations \cite{Long94}
with Katz's middle convolution algorithm \cite{Katz96, DettweilerReiter00,
DettweilerReiter07}. Given a representation
$\rho \colon F_n \rtimes B_n \to \GL(V)$ of the semidirect product of the
free group $F_n$ by the braid group, and a parameter
$\lambda \in \kk^{\times}$, the construction first produces an intermediate
representation $\rLMl$ of $F_n \rtimes B_n$ on $V^{\oplus n}$ (the
\emph{twisted Long--Moody construction}) and then passes to the quotient by
the invariant subspace $K + L$ originating in the multiplicative middle
convolution for Knizhnik--Zamolodchikov (KZ)-type equations; the result
is denoted $\rKLM$.
In the companion paper \cite{Negami25} the author showed that, upon
restriction to the pure braid group, the KLM construction corresponds to
Haraoka's multiplicative middle convolution for KZ-type equations
\cite{Haraoka12, Haraoka20}, and that the construction preserves a nondegenerate invariant
Hermitian form, which may be indefinite.

It is therefore natural to ask when the KLM construction produces
representations of the Iwahori--Hecke algebra, that is, when the operators
$\rKLM(\sigma_i)$ satisfy a quadratic relation. Since the minimal polynomial
of $\rKLM(\sigma_i)$ is governed by its eigenvalues and Jordan structure,
the first step is to compute the spectrum of the braid generators under the
construction. This is our first main result.

\begin{theorem}[Theorem \ref{thm:charpoly}]\label{thm:A}
Let $\rho \colon F_n \rtimes B_n \to \GL(V)$ be a representation on an
$N$-dimensional $\kk$-vector space $V$, and write $g_i := \rho(x_i)$,
$s_i := \rho(\sigma_i)$. Then, for every $\lambda \in \kk^{\times}$ and
every $i = 1, \dots, n-1$,
\[
\det\bigl(z I_{Nn} - \rLMl(\sigma_i)\bigr)
= \det(z I_N - s_i)^{\,n-1}\,\det(z I_N + s_i g_i).
\]
\end{theorem}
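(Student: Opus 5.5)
The plan is to compute directly from the explicit block form of $\rLMl(\sigma_i)$ on $V^{\oplus n}$, as recalled from \cite{HiroeNegami23} in Section~\ref{sec:prelim}. In the standard Long--Moody shape, $\rLMl(\sigma_i)$ should act on the $j$-th summand by $s_i$ for $j\neq i,i+1$, possibly with off-diagonal contributions that land only in the $i$-th or $(i+1)$-st summand. These contributions come from the Artin action $\sigma_i x_j\sigma_i^{-1}$, and the parameter $\lambda$ enters through them. The entries coupling the summands $i$ and $i+1$ form a $2\times 2$ block matrix $M_i$ of $N\times N$ blocks, built from $s_i$, $g_i$, $g_{i+1}$ and $\lambda$.

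\emph{Step 1 (block triangularity).} I would first check from the formula that, after permuting the summands so that $i,i+1$ come last, $\rLMl(\sigma_i)$ is block triangular. The diagonal blocks should be $s_i$ (with multiplicity $n-2$) together with the $2N\times 2N$ block $M_i$. If the off-diagonal terms instead sit in the rows of the other summands, I would pass to the transpose ordering. Either way this yields
\[
\det\bigl(zI_{Nn}-\rLMl(\sigma_i)\bigr)=\det(zI_N-s_i)^{n-2}\det(zI_{2N}-M_i).
\]
All $\lambda$-dependence away from $M_i$ then disappears at this step, since it lives off the block diagonal.

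\emph{Step 2 (the $2N\times 2N$ block).} It remains to show that $\det(zI_{2N}-M_i)=\det(zI_N-s_i)\det(zI_N+s_ig_i)$. The approach is to find an explicit block change of basis
\[
Q=\begin{pmatrix} I & 0\\ X & I\end{pmatrix}, \qquad \text{with } X \text{ some expression such as } g_i^{\pm1}\ \text{or}\ \lambda$-multiples,
\]
such that $Q^{-1}M_iQ$ is block triangular with diagonal blocks $s_i$ and $-s_ig_i$. To find $X$, I would impose vanishing of the off-diagonal corner. For this I would use the semidirect-product relations $s_i g_i s_i^{-1}=g_ig_{i+1}g_i^{-1}$ and $s_i g_{i+1}s_i^{-1}=g_i$, which are the images of the Artin relations. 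These relations should let the twist $g_{i+1}g_i^{-1}$ and $\lambda$ cancel from the diagonal blocks.

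If no clean conjugation appears, the fallback is the Schur complement. I would write $zI-M_i=\begin{pmatrix}A&B\\C&D\end{pmatrix}$ with $A$ invertible for generic $z$, and compute $\det A\det(D-CA^{-1}B)$ as a rational function in $z$. I would simplify it with the same relations and then extend to all $z$ by polynomial identity.

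\emph{Main obstacle.} The hard part is Step 2: the block $M_i$ contains $\lambda$ and both $g_i$ and $g_{i+1}$. One has to see precisely how the Artin relations collapse the determinant to $\det(zI+s_ig_i)$, independently of $\lambda$ and of $g_{i+1}g_i^{-1}$. My first attempt would be a sanity check in the rank-one case $N=1$ with commuting scalars, where the claimed eigenvalues of $M_i$ are $s_i$ and $-s_ig_i$, matching the Burau eigenvalues $1,-t$. I would use that computation to guess the correct $X$ and then verify it blockwise in general, taking care to respect the order of the noncommuting products.
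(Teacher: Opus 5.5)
Your skeleton matches the paper's: split off the $n-2$ diagonal copies of $s_i$, then triangularize the middle $2N\times 2N$ block using the semidirect-product relation. But Step 2 is the whole content of the theorem, and you leave it as ``find $X$''. Worse, the relations you plan to find it with are not the ones that hold here.

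Under the Artin action \eqref{eq:artin} of this paper, $\sigma_i x_i\sigma_i^{-1}=x_{i+1}$ gives $s_ig_is_i^{-1}=g_{i+1}$, and the other relation is $s_ig_{i+1}s_i^{-1}=g_{i+1}^{-1}g_ig_{i+1}$. The identities you wrote, $s_ig_is_i^{-1}=g_ig_{i+1}g_i^{-1}$ and $s_ig_{i+1}s_i^{-1}=g_i$, are the ones satisfied by $s_i^{-1}$ (the opposite convention), and they are false in general.

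This is not cosmetic. The actual middle block, read off from Definition~\ref{def:LM}, is $B=\begin{pmatrix} O & s_ig_i\\ s_i & s_i(I-g_{i+1})\end{pmatrix}$. Your Schur-complement fallback then gives $\det(zI_{2N}-B)=\det\bigl(z^2I-zs_i+zs_ig_{i+1}-s_i^2g_i\bigr)$. This equals $\det\bigl((zI+s_ig_{i+1})(zI-s_i)\bigr)$ precisely because $s_i^2g_i=s_ig_{i+1}s_i$, i.e.\ $g_{i+1}s_i=s_ig_i$. With your relation $s_ig_{i+1}=g_is_i$ instead, the corresponding factorization would need $g_i$ to commute with $s_i^2$, which your relations do not give, so the computation stalls. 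With the correct relation the proof closes at once, using $s_i(zI+s_ig_i)s_i^{-1}=zI+s_ig_{i+1}$ for the last factor.

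Two further corrections. First, $\lambda$ plays no role at all. By Definition~\ref{def:LM}, $\rLMl(\sigma_i)=s_i^{\oplus n}\cdot\operatorname{diag}(I,R_i,I)$ is block diagonal and $\lambda$-free. So the ``main obstacle'' you name does not exist, and Step 1 is immediate.

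Second, your ansatz $Q=\begin{pmatrix} I & 0\\ X & I\end{pmatrix}$ cannot give diagonal blocks literally equal to $s_i$ and $-s_ig_i$. The top-left block of $Q^{-1}BQ$ is $s_ig_iX$, which forces $X=g_i^{-1}$, and then the lower-left corner is nonzero in general. Your suggested candidates $g_i^{\pm 1}$ therefore miss. The choice that works is $X=g_{i+1}^{-1}$, corresponding to the invariant subspace $\{(g_{i+1}w,w)\}$ on which $B$ acts by $s_i$. It yields diagonal blocks $g_{i+1}s_ig_{i+1}^{-1}$ and $-s_ig_{i+1}$, which are only similar to $s_i$ and $-s_ig_i$; that suffices for the determinant. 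The paper encodes the same subspace through $P_i=\begin{pmatrix} g_{i+1} & I\\ I & O\end{pmatrix}$ and obtains exactly $\begin{pmatrix} s_i & s_i\\ O & -s_ig_i\end{pmatrix}$. This explicit triangular form is what is reused later for the Jordan-structure and projection-criterion arguments.
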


The proof rests on an explicit block triangularization of the local
$2N \times 2N$ block (see \eqref{eq:triangular} below), which in turn
uses only the semidirect product relation
$g_{i+1} = s_i g_i s_i^{-1}$. Theorem \ref{thm:A} has two immediate
structural consequences. First, the characteristic polynomial does not
involve $\lambda$: the convolution parameter enters the construction only
through the images of the free group generators, and hence, on the quotient,
only through the subspace $K + L$, i.e., through \emph{which} eigenvalues
are removed, never through their values. Second, if one writes
$g_{i+1} = C_i g_i$, then $C_i$ is forced to be the commutator
$[s_i, g_i]$, and the spectrum is entirely insensitive to it:
$\spec(s_i g_{i+1}) = \spec(s_i C_i g_i) = \spec(s_i g_i)$. Consequently, the \emph{characteristic polynomial} of the braid
generators depends only on the pair
$\bigl(\spec(s_i), \spec(s_i g_i)\bigr)$. We emphasize that the Hecke
property is a condition on the \emph{minimal} polynomial and is decided
by this pair only in combination with two further pieces of data: the
Jordan structure of the braid generators (Remark \ref{rem:jordan}) and
the action on the subspace $K + L$ removed in the quotient (Corollary
\ref{cor:quotient}). Since a Hecke relation $(T - a)(T - b) = 0$ with
$a \neq b$ forces the braid generators to act semisimply on the quotient,
restricting attention to semisimple braid images there entails no loss
of generality for the classification problem; under that standing hypothesis the eigenvalue
data become decisive. We stress that this concerns the braid images on
the quotient and not the input $g$: semisimplicity of the former does
not imply that of the latter, and non-semisimple $g$ can yield Hecke
modules (Remark \ref{rem:muone}). The hypothesis that $g$ be semisimple,
in force in Section \ref{sec:hecke}, is therefore a genuine restriction.

Our second main result is a complete classification in the case where the
braid part of the input representation is scalar, which is the natural
common generalization of the rank-one situation underlying the Burau
representation. Note that for any $g \in \GL(V)$ and $c \in \kk^{\times}$,
setting $\rho(x_j) = g$ for all $j$ and $\rho(\sigma_i) = cI_N$ defines a
representation of $F_n \rtimes B_n$ (Lemma \ref{lem:scalarrep}).

\begin{theorem}[Theorem \ref{thm:classification}]\label{thm:B}
Let $\kk$ be an algebraically closed field of characteristic zero, let
$g \in \GL(V)$ be semisimple, $c \in \kk^{\times}$, and let $\rho = \rho_{g,c}$
be the representation above. Let $\lambda \in \kk^{\times} \setminus \{1\}$.
\begin{enumerate}
\item[(i)] If $\spec(g) \subseteq \{1, \mu\}$ for some
$\mu \notin \{1, -1\}$, then the operators $\rKLM(\sigma_i)$ satisfy
\[
\bigl(T - c\bigr)\bigl(T + c\mu\bigr) = 0,
\]
and $\sigma_i \mapsto c^{-1}\rKLM(\sigma_i)$ defines a representation of
the Iwahori--Hecke algebra $H_n(\mu)$.
\item[(ii)] Conversely, suppose that for some
$\lambda \in \kk^{\times}\setminus\{1\}$ the KLM quotient is nonzero and
the operators $\rKLM(\sigma_i)$ have a quadratic minimal polynomial with
two distinct roots. If $n \geq 3$, then
$\spec(g) \subseteq \{1, \mu\}$ for some $\mu \notin \{1, -1\}$; since
this condition does not involve $\lambda$, the conclusion of (i) then
holds for every $\lambda \in \kk^{\times}\setminus\{1\}$. If
$n = 2$, the same conclusion holds at nonresonant parameters, and
exactly one further family appears at resonance:
$\spec(g) \setminus \{1\} = \{\nu, -\nu\}$ with $\lambda = \nu^{-2}$
(Theorem \ref{thm:classification}(iv)).
\end{enumerate}
\end{theorem}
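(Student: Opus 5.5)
The plan is to reduce to the rank-one case. Everything in the construction is built out of $g$ and scalars, so it should decompose along the eigenspaces of $g$.

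\emph{Step 1: splitting over the eigenspaces of $g$.} For $\rho = \rho_{g,c}$ we have $g_i = g$ for all $i$ and $s_i = cI_N$. Since $g$ is semisimple, write $V = \bigoplus_{\mu \in \spec(g)} V_\mu$. Each $V_\mu$ is stable under $\rho$, and on it $\rho$ is a multiple of the rank-one representation $\rho_{\mu,c}$ given by $x_j \mapsto \mu$, $\sigma_i \mapsto c$. The intermediate operators $\rLMl$ are block matrices whose entries are polynomials in $g$, $\lambda$ and $c$. Hence $V^{\oplus n} = \bigoplus_\mu V_\mu^{\oplus n}$ is a decomposition into $\rLMl$-invariant summands. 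I will check from the definitions in Section~\ref{sec:prelim} that $K$ and $L$ split the same way, i.e. $K = \bigoplus_\mu (K\cap V_\mu^{\oplus n})$ and likewise for $L$. This holds because they are kernels and images of operators that are polynomial in $g$. It follows that $\rKLM \cong \bigoplus_\mu \dim(V_\mu)\cdot \rho^{\mathrm{KLM}}_{\lambda}[\rho_{\mu,c}]$. Consistently, Theorem~\ref{thm:A} gives the characteristic polynomial $(z-c)^{N(n-1)}\det(z+cg)$, so the candidate eigenvalues are $c$ and $-c\mu$ for $\mu\in\spec(g)$.

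\emph{Step 2: the rank-one pieces.} For $\mu = 1$ and $\lambda \neq 1$, I expect $K+L$ to be everything, so this summand contributes nothing; I will verify this with Corollary~\ref{cor:quotient}. For $\mu \neq 1$, the rank-one computation in the paper identifies the output, restricted to $B_n$, with $c$ times the unreduced Burau representation at $t=\mu$ (generic $\lambda$) or the reduced Burau representation (resonant $\lambda$). Both satisfy $(T-c)(T+c\mu)=0$.

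For $\mu = -1$ the two roots coincide. Here I will show, via the local $2\times 2$ block triangularization \eqref{eq:triangular}, that the generator carries a nontrivial Jordan block for eigenvalue $c$ that survives the quotient. This excludes $\mu=-1$ from any quadratic relation with distinct roots. Part (i) then follows by summing the pieces, and dividing by $c$ gives an $H_n(\mu)$-module.

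\emph{Step 3: the converse.} Suppose the minimal polynomial is quadratic with two distinct roots. Then $-1\notin\spec(g)$ whenever that summand survives, by the Jordan-block argument above. The surviving eigenvalues are $c$ (where present) and $-c\mu$ for the surviving $\mu\neq1$.

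For $n\ge 3$ every surviving rank-one summand has dimension at least $n-1\ge 2$. It contains the eigenvalue $c$ as well as $-c\mu$, since the reduced Burau representation with $n\ge3$ has both eigenvalues. Two distinct values $\mu,\mu'\neq 1$ would therefore produce three distinct roots $c,-c\mu,-c\mu'$. Hence $\spec(g)\subseteq\{1,\mu\}$ with $\mu\ne\pm1$.

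For $n=2$ the resonant reduced summand is one-dimensional with eigenvalue $-c\mu$ only, and resonance means $\lambda=\mu^{-2}$ (to be confirmed from the resonance condition for $K+L$). Two summands $\mu\neq\mu'$ can then give exactly two roots without $c$ appearing. Both must be resonant with the same $\lambda$, so $\mu^2=\mu'^2$, i.e. $\mu'=-\mu=\nu$ up to naming. This gives the family $\spec(g)\setminus\{1\}=\{\nu,-\nu\}$, $\lambda=\nu^{-2}$. A remaining case analysis excludes one resonant and one generic summand, since that produces three roots.

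\emph{Main obstacle.} The delicate point is Step 1 together with the resonant bookkeeping. I must verify that $K$ and $L$ really decompose along $g$-eigenspaces, and determine precisely which rank-one summands lose their $c$-eigenvalue in the quotient. I will first attempt this by writing $K$ and $L$ explicitly in the rank-one case and reading off dimensions from Corollary~\ref{cor:quotient}.
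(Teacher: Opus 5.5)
Your proposal is correct and follows essentially the paper's route: the eigenspace splitting of Lemma~\ref{lem:decomp}, the rank-one analysis of Lemma~\ref{lem:rankone}, and a root-counting case analysis equivalent to the paper's computation of the minimal polynomial as the $\operatorname{lcm}$ of the rank-one minimal polynomials. Two small points need tightening. First, the $\nu = 1$ summand vanishes directly because $K = \Ker(g - I)^{\oplus n}$; Corollary~\ref{cor:quotient} only tracks spectra and cannot show this. Second, the triangularization \eqref{eq:triangular} gives the Jordan block for $\nu = -1$ only before the quotient, so at the one resonance where this matters ($n \geq 3$ odd, $\lambda = -1$) you still need the check of Lemma~\ref{lem:rankone}(d) that $\operatorname{im}(S_i - cI) = \kk(\varepsilon_i - \varepsilon_{i+1})$ is not contained in $L = \kk(\nu^{n-1}, \dots, \nu, 1)^{T}$.
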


The three possible eigenvalues of $g$ play sharply different roles. The
eigenvalue $1$ contributes a pair of eigenvalues $\{c, -c\}$ to the
intermediate representation, and this pair is absorbed \emph{exactly} by the
subspace $K$ in the quotient. An eigenvalue $\mu \notin \{1,-1\}$
contributes the second Hecke eigenvalue $-c\mu$. The eigenvalue $-1$,
however, is genuinely pathological: it contributes the eigenvalue
$-c \cdot (-1) = c$, so that the eigenvalue \emph{set} of
$\rKLM(\sigma_i)$ may still equal $\{c, -c\mu\}$, while a nontrivial Jordan
block for the eigenvalue $c$ appears and the minimal polynomial acquires the
factor $(T - c)^2$ (Remark \ref{rem:minusone}). Thus the slogan ``exactly
two eigenvalues'' must be read as a statement about the minimal polynomial,
not about the eigenvalue set. For $n = 2$, resonant parameters produce
two further phenomena --- a scalar degeneration and a double-resonant
family --- described in Theorem \ref{thm:classification}(iv), (v). The distinction is also meaningful
algebraically: when the only nontrivial eigenvalue of $g$ is $-1$, the
relation becomes $(T - c)^2 = 0$, which after rescaling is precisely the
defining relation of the repeated-root specialization $H_n(-1)$; what
fails is semisimplicity and the two-distinct-root condition, not the
quadratic relation itself.

In the rank-one case $N = 1$, $g = t$, $c = 1$, Theorem \ref{thm:B}
recovers the Burau representation: for generic $\lambda$ one obtains the
$n$-dimensional unreduced Burau representation, and at the resonant
parameter $\lambda = t^{-n}$ (for $t^{n} \neq 1$) the subspace $L$
becomes one-dimensional and
the quotient is the reduced Burau representation
(Example \ref{ex:burau}). The resonance set here is exactly the exceptional
set $E$ governing the degeneration of the invariant Hermitian form in
\cite{Negami25}.

For general input, the classification is governed by the action on the
subspace $K + L$ removed in the quotient, and this action can be
computed. We show that $L$ is equivariantly isomorphic to
$\Ker(\lambda\, g_1 \cdots g_n - I)$ with the braid generators acting
through $s_i$ (Lemma \ref{lem:Laction}), that the action on $K$
decomposes into restrictions of $s_i$ and a twisted swap whose
characteristic polynomial is
$\det\bigl(z^2 I - (s_i g_i s_i)|_{\Ker(g_i - I)}\bigr)$ (Lemma
\ref{lem:Kaction}), and we
deduce a closed formula for the characteristic polynomial of
$\rKLM(\sigma_i)$ as an exact quotient of characteristic polynomials
(Theorem \ref{thm:generalcharpoly}). A projection criterion settles
semisimplicity at the intermediate level (Proposition
\ref{prop:projection}). Together with the containment criterion
$(\rLMl(\sigma_i) - a)(\rLMl(\sigma_i) - b)V^{\oplus n} \subseteq K + L$,
which is equivalent to the Hecke relation on the quotient (Remark
\ref{rem:exact}), this reduces the Hecke question for arbitrary input
to explicit linear algebra.

The paper is organized as follows. Section \ref{sec:prelim} recalls the
twisted Long--Moody and KLM constructions. Section \ref{sec:charpoly}
proves Theorem \ref{thm:A} and derives its corollaries. Section
\ref{sec:hecke} proves Theorem \ref{thm:B}, discusses the Burau example
and the repeated-root specialization at $-1$, and establishes the
Temperley--Lieb factorization of the resulting Hecke modules (Remark
\ref{rem:TL}). Section \ref{sec:general}
treats the general (non-scalar) case, including an exact
Temperley--Lieb criterion (Remark \ref{rem:TLgeneral}). Section \ref{sec:discussion}
collects remarks on iteration, unitarity, and related open problems.
Figure \ref{fig:outline} summarizes the construction and the role of
each eigenvalue of $g$ in the scalar case.

\begin{figure}[htbp]
\centering
\begin{tikzpicture}[
  >=Stealth,
  box/.style  = {draw, rounded corners=2pt, align=center,
                 inner sep=5pt, font=\small},
  note/.style = {align=center, font=\scriptsize, text=black!70},
  lab/.style  = {font=\scriptsize, align=center},
  node distance = 10mm and 10mm
]
\node[box] (rho)
  {input $\rho \colon F_n \rtimes B_n \to \GL(V)$\\[1pt]
   $g = \rho(x_j)$ semisimple, $\spec(g) \subseteq \{1, \mu\}$,\quad
   $\rho(\sigma_i) = c\, I_N$};

\node[box, below=of rho] (lm)
  {$\rLMl$ on $V^{\oplus n}$\\[1pt]
   $\spec\bigl(\rLMl(\sigma_i)\bigr)
    = \{c\}^{\uplus(n-1)N} \uplus \bigl(-c\,\spec(g)\bigr)$};

\node[box, below=of lm] (klm)
  {$\rKLM$ on $V^{\oplus n}/(K + L)$\\[1pt]
   $(T - c)(T + c\mu) = 0$};

\node[box, right=of klm] (hecke)
  {$H_n(\mu)$-module\\[1pt]
   $T_i = c^{-1}\rKLM(\sigma_i)$\\[1pt]
   {\scriptsize (Thm.~\ref{thm:classification}(ii))}};

\node[box, right=of hecke] (tl)
  {$TL_n\bigl(\mu^{1/2} + \mu^{-1/2}\bigr)$-module\\[1pt]
   $e_i = \mu^{-1/2}(1 - T_i)$\\[1pt]
   {\scriptsize (Rem.~\ref{rem:TL})}};

\draw[->] (rho)  -- node[lab, right=2pt]
  {twisted Long--Moody, $\lambda \in \kk^{\times}\setminus\{1\}$}   (lm);
\draw[->] (lm)   -- node[lab, right=2pt]
  {quotient by $K + L$} (klm);
\draw[->] (klm)  -- (hecke);
\draw[->] (hecke) -- (tl);

\node[note, right=4mm of lm, anchor=west, text width=54mm] (absorb)
  {$K$ absorbs the pairs $\{c, -c\}$ coming from the eigenvalue $1$; $L \neq 0$ only at the resonance $\lambda = \mu^{-n}$: unreduced $\rightsquigarrow$ reduced Burau (Ex.~\ref{ex:burau})};
\draw[->, dashed, black!60] (absorb.west) -- ($(lm.east)!.5!(klm.east)$);

\node[note, below=5mm of hecke]
  {$\mu = -1$ excluded: Jordan block, the repeated-root $H_n(-1)$,\\
   still a $TL_n(0)$-module (Rem.~\ref{rem:minusone}, \ref{rem:TL})};
\end{tikzpicture}
\caption{The construction of Temperley--Lieb representations from the
KLM construction with scalar braid part, in the two-distinct-root
regime of Theorem \ref{thm:classification}(ii). Within this family and
for $n \geq 3$, the convolution parameter $\lambda$ affects only the
dimension, through $L$, and the Hecke and Temperley--Lieb structure is
decided by $g$ alone; for $n = 2$ resonant parameters produce
additional phenomena.}
\label{fig:outline}
\end{figure}

\section{Preliminaries}\label{sec:prelim}

Throughout, $\kk$ denotes a field and $V$ a finite-dimensional
$\kk$-vector space, $N = \dim V$, and $n \geq 2$. We freely use the
notation of \cite{HiroeNegami23, Negami25} and recall only what is needed.

Let $B_n$ be the Artin braid group with generators
$\sigma_1, \dots, \sigma_{n-1}$ and let $F_n$ be the free group with
generators $x_1, \dots, x_n$. The Artin representation
$\theta \colon B_n \to \operatorname{Aut}(F_n)$ is defined by
\begin{equation}\label{eq:artin}
\theta_{\sigma_i}(x_j) =
\begin{cases}
x_{i+1} & j = i,\\
x_{i+1}^{-1} x_i x_{i+1} & j = i+1,\\
x_j & j \neq i, i+1,
\end{cases}
\end{equation}
and $F_n \rtimes B_n = F_n \rtimes_{\theta} B_n$ denotes the corresponding
semidirect product, in which $\sigma x \sigma^{-1} = \theta_{\sigma}(x)$
for $\sigma \in B_n$, $x \in F_n$.

Let $\rho \colon F_n \rtimes B_n \to \GL(V)$ be a representation and set
$g_i := \rho(x_i)$, $s_i := \rho(\sigma_i)$. Applying $\rho$ to the
relation $\sigma_i x_i \sigma_i^{-1} = x_{i+1}$ yields the identity
\begin{equation}\label{eq:key}
g_{i+1} = s_i\, g_i\, s_i^{-1}, \qquad \text{equivalently} \qquad
g_{i+1} s_i = s_i g_i,
\end{equation}
which will be used repeatedly.

\begin{definition}[twisted Long--Moody construction
\cite{HiroeNegami23, Negami25}]\label{def:LM}
Let $\lambda \in \kk^{\times}$. The twisted Long--Moody construction
associates to $\rho$ the representation
$\rLMl \colon F_n \rtimes B_n \to \GL(V^{\oplus n})$ defined on generators by
\[
\rLMl(x_i) := \rDR(x_i), \quad
\rLMl(\sigma_i) := s_i^{\oplus n} \cdot
\begin{pmatrix}
I_{N(i-1)} & & \\
& R_i & \\
& & I_{N(n-i-1)}
\end{pmatrix}, \quad
R_i :=
\begin{pmatrix}
O & g_i \\ I_N & I_N - g_{i+1}
\end{pmatrix},
\]
where $\rDR(x_i)$ is the Dettweiler--Reiter convolution matrix, namely the
identity matrix except in the $i$-th block row, which equals
\[
\bigl(\lambda(g_1 - 1), \dots, \lambda(g_{i-1} - 1),\ \lambda g_i,\
g_{i+1} - 1, \dots, g_n - 1\bigr).
\]
\end{definition}

Note that $\rLMl(\sigma_i) = \rLM(\sigma_i)$ does not involve $\lambda$;
the parameter enters only through the free group part.

\begin{definition}[Katz--Long--Moody construction
\cite{HiroeNegami23}]\label{def:KLM}
Define the subspaces $K, L \subseteq V^{\oplus n}$ by
\[
K := \bigl\{(w_1, \dots, w_n)^{T} ;\ w_j \in \Ker(g_j - 1)\ (1 \le j \le n)
\bigr\},
\qquad
L := \bigcap_{j=1}^{n} \Ker\bigl(\rLMl(x_j) - I_{Nn}\bigr).
\]
Then $K + L$ is $\rLMl$-invariant, and the Katz--Long--Moody construction
$\rKLM$ is the induced representation of $F_n \rtimes B_n$ on
$V^{\oplus n} / (K + L)$.
\end{definition}

We shall use the following characterization of $L$, due to
Dettweiler--Reiter \cite[Section~2.1]{DettweilerReiter07} (see also
\cite[Eq.~(9)]{Negami25}). For $1 \le i \le n$, let $E_i(v) \in V$
denote the $i$-th block of $\bigl(\rLMl(x_i) - I_{Nn}\bigr)v$; since
$\rDR(x_i)$ differs from the identity only in the $i$-th block row, one
has $L = \{v \,;\, E_i(v) = 0,\ 1 \le i \le n\}$. A direct computation
from Definition \ref{def:LM} gives
\begin{equation}\label{eq:Ediff}
E_{i+1}(v) - E_i(v) = (1 - \lambda)\bigl(v_i - g_{i+1} v_{i+1}\bigr),
\qquad 1 \le i \le n-1,
\end{equation}
so that for $\lambda \neq 1$ the $n$ conditions are equivalent to a
chain: $v = (v_1, \dots, v_n)^{T} \in V^{\oplus n}$ belongs to $L$ if and
only if
\begin{equation}\label{eq:Lchar}
v_k = g_{k+1} v_{k+1} \quad (1 \le k \le n-1), \qquad
v_n = \lambda\,(g_1 \cdots g_n)\, v_n.
\end{equation}
At $\lambda = 1$ the right-hand side of \eqref{eq:Ediff} vanishes
identically and \eqref{eq:Lchar} fails; the degenerate case is described
in Remark \ref{rem:lambdaone} below. Accordingly, every use of
\eqref{eq:Lchar} in what follows carries the hypothesis
$\lambda \neq 1$.

Finally, we fix our convention for the Hecke algebra. For
$q \in \kk^{\times}$, the Iwahori--Hecke algebra $H_n(q)$ of type
$A_{n-1}$ is the $\kk$-algebra with generators $T_1, \dots, T_{n-1}$,
the braid relations, and the quadratic relations
\begin{equation}\label{eq:heckerel}
(T_i - 1)(T_i + q) = 0, \qquad i = 1, \dots, n-1.
\end{equation}
A representation of $B_n$ in which the images $A_i$ of $\sigma_i$ satisfy
$(A_i - a)(A_i - b) = 0$ with $a \in \kk^{\times}$ therefore factors
through $H_n(-b/a)$ after rescaling $\sigma_i \mapsto a^{-1} A_i$.
When $-b/a = -1$, the defining polynomial has a repeated root; thus
factorization through $H_n(-1)$ must be distinguished from the condition
that the braid generators have two distinct eigenvalues.

\section{The characteristic polynomial of the braid
generators}\label{sec:charpoly}

\begin{theorem}\label{thm:charpoly}
Let $\rho \colon F_n \rtimes B_n \to \GL(V)$ be a representation,
$g_i = \rho(x_i)$, $s_i = \rho(\sigma_i)$, and let
$\lambda \in \kk^{\times}$. Then, for $i = 1, \dots, n-1$,
\begin{equation}\label{eq:charpoly}
\det\bigl(z I_{Nn} - \rLMl(\sigma_i)\bigr)
= \det(z I_N - s_i)^{\,n-1}\,\det(z I_N + s_i g_i).
\end{equation}
In particular, the characteristic polynomial of $\rLMl(\sigma_i)$ is
independent of $\lambda$, and as a multiset
\begin{equation}\label{eq:multiset}
\spec\bigl(\rLMl(\sigma_i)\bigr)
= \spec(s_i)^{\uplus (n-1)} \uplus
\bigl(-\spec(s_i g_i)\bigr),
\end{equation}
where spectra are taken in an algebraic closure of $\kk$ and counted with
multiplicity; $\uplus$ denotes the union of multisets, adding
multiplicities, and $M^{\uplus k}$ the $k$-fold multiset union of $M$
with itself.
\end{theorem}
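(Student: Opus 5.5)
The matrix $\rLMl(\sigma_i) = s_i^{\oplus n}\cdot \operatorname{diag}(I_{N(i-1)}, R_i, I_{N(n-i-1)})$ is block diagonal with respect to the decomposition $V^{\oplus n} = V^{\oplus(i-1)} \oplus V^{\oplus 2} \oplus V^{\oplus(n-i-1)}$. This holds because $s_i^{\oplus n}$ is block diagonal in single $V$-blocks. The outer diagonal blocks are copies of $s_i$, contributing $\det(zI_N - s_i)^{n-2}$. It therefore suffices to show that the local $2N\times 2N$ block
\[
M := \begin{pmatrix} s_i & O \\ O & s_i\end{pmatrix}\begin{pmatrix} O & g_i \\ I_N & I_N - g_{i+1}\end{pmatrix}
= \begin{pmatrix} O & s_i g_i \\ s_i & s_i - s_i g_{i+1}\end{pmatrix}
\]
has characteristic polynomial $\det(zI_N - s_i)\det(zI_N + s_i g_i)$. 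The parameter $\lambda$ does not appear anywhere, which already gives the $\lambda$-independence.

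To handle $M$, I would find a block triangularization by an explicit conjugation, using only \eqref{eq:key}. The natural guess comes from looking for an $M$-invariant subspace on which $M$ acts by $s_i$. Try vectors of the form $(u, u)^T$:
\[
M(u,u)^T = (s_i g_i u,\ s_i u + s_i u - s_i g_{i+1} u)^T .
\]
This is not obviously of the same form, so instead I would try the graph $\{(a u, u)\}$ for a suitable operator $a$, or else work with the quotient. Concretely, conjugate by $P = \begin{pmatrix} I & X \\ O & I\end{pmatrix}$ or its lower-triangular analogue, and choose $X$ so that an off-diagonal block vanishes. With $P = \begin{pmatrix} I & O \\ Y & I\end{pmatrix}$, the lower-left block of $P M P^{-1}$ becomes a Riccati-type expression in $Y$.

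The candidate I would test first is $Y = -I$, which corresponds to the invariant subspace spanned by vectors $(w, -w)$. Then
\[
M(w,-w)^T = (-s_i g_i w,\ s_i w - s_i w + s_i g_{i+1} w)^T = (-s_i g_i w,\ s_i g_{i+1} w)^T .
\]
Using $s_i g_{i+1}$ versus $s_i g_i$ does not immediately give the form $(w',-w')$. So I would instead use the relation $s_i g_i = g_{i+1} s_i$ to test the subspace $\{(g_{i+1}u, -u)\}$ or $\{(u, -g_{i+1}^{-1}u)\}$ until one closes up.

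Once an invariant subspace $W\cong V$ is found, the characteristic polynomial factors as the product of the characteristic polynomials of $M|_W$ and of $M$ on $V^{\oplus 2}/W$. I expect one of these to be conjugate to $s_i$ and the other to $-s_i g_i$; the latter could equally appear as $-g_{i+1}s_i$, which has the same spectrum.

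A safer fallback, which I would actually use if the search for $W$ stalls, is a Schur complement computation:
\[
\det(zI_{2N} - M) = \det\begin{pmatrix} zI & -s_i g_i \\ -s_i & zI - s_i + s_i g_{i+1}\end{pmatrix} = \det\bigl(z(zI - s_i + s_i g_{i+1}) - s_i s_i g_i\bigr),
\]
valid since the upper-left block $zI$ commutes with everything. Substituting $s_i g_i = g_{i+1}s_i$ gives $s_i^2 g_i = s_i g_{i+1} s_i$, so the inner matrix is
\[
z^2 - z s_i + z s_i g_{i+1} - s_i g_{i+1} s_i = (zI - s_i)\cdot z + s_i g_{i+1}(zI - s_i) .
\]
This is not literally a product of two factors, but multiplying on the right by $(zI - s_i)^{-1}$ formally helps. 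The cleaner identity to aim for is
\[
(zI + s_i g_{i+1})(zI - s_i) = z^2 - z s_i + z s_i g_{i+1} - s_i g_{i+1} s_i ,
\]
which matches exactly. Hence
\[
\det(zI - M) = \det(zI + s_i g_{i+1})\det(zI - s_i),
\]
and $\det(zI + s_i g_{i+1}) = \det(zI + s_i g_i)$ because $s_i g_{i+1} = s_i(s_i g_i s_i^{-1})$ is conjugate to $s_i g_i$.

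The main obstacle is spotting this factorization. It works as a polynomial identity in $z$ over the commutative-by-construction setting of determinants, so no invertibility of $zI - s_i$ is needed. The multiset statement \eqref{eq:multiset} then follows directly.
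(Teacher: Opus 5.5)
Your proof is correct, and its final (Schur complement) step takes a different route from the paper. The paper computes no determinant directly. It conjugates the local block by $P_i = \bigl(\begin{smallmatrix} g_{i+1} & I \\ I & O \end{smallmatrix}\bigr)$ and uses $g_{i+1}s_i = s_ig_i$ to reach the block upper triangular form $\bigl(\begin{smallmatrix} s_i & s_i \\ O & -s_ig_i \end{smallmatrix}\bigr)$.

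Your invariant-subspace search was one sign away from this. The graph $\{(g_{i+1}u, u)\}$, not $\{(g_{i+1}u,-u)\}$, is $M$-invariant, since $M(g_{i+1}u,u) = (g_{i+1}s_iu,\, s_iu)$. Through $(a,b)\mapsto a - g_{i+1}b$, the induced operator on the quotient is $-g_{i+1}s_i = -s_ig_i$.

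You instead use $\det\bigl(\begin{smallmatrix} A & B \\ C & D \end{smallmatrix}\bigr) = \det(AD - CB)$, valid because $A = zI$ commutes with $C$. You then rewrite $s_i^2 g_i = s_ig_{i+1}s_i$ and factor on the right: $z^2I - zs_i + zs_ig_{i+1} - s_ig_{i+1}s_i = (zI + s_ig_{i+1})(zI - s_i)$. The last step is the conjugacy $s_ig_{i+1} = s_i(s_ig_i)s_i^{-1}$, which the paper also records at the end of its proof.

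What each route buys: yours is shorter and needs no guess of a conjugating matrix, but it yields only the characteristic polynomial. The paper's similarity produces the triangular form itself, with the explicit coupling block $s_i$. That form is what Remark~\ref{rem:jordan} and Proposition~\ref{prop:projection} later use to analyse Jordan structure and semisimplicity.

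In a final write-up, drop the exploratory false starts: the $(u,u)$ and $(w,-w)$ trials, and the remark that the inner matrix is ``not literally a product''. The factorization you then exhibit is exact.
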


\begin{proof}
By Definition \ref{def:LM}, $\rLMl(\sigma_i)$ is block diagonal with
$(k,k)$-block $s_i$ for $k \neq i, i+1$ and with middle
$2N \times 2N$ block
\[
B :=
\begin{pmatrix}
O & s_i g_i \\ s_i & s_i(I - g_{i+1})
\end{pmatrix}
\]
in the block rows and columns $i, i+1$. Hence
\begin{equation}\label{eq:split}
\det\bigl(z I_{Nn} - \rLMl(\sigma_i)\bigr)
= \det(z I_N - s_i)^{\,n-2}\,\det(z I_{2N} - B).
\end{equation}
Set
\[
P_i :=
\begin{pmatrix}
g_{i+1} & I \\ I & O
\end{pmatrix},
\qquad
P_i^{-1} =
\begin{pmatrix}
O & I \\ I & -g_{i+1}
\end{pmatrix}.
\]
A direct computation gives
\[
B P_i =
\begin{pmatrix}
s_i g_i & O \\ s_i g_{i+1} + s_i(I - g_{i+1}) & s_i
\end{pmatrix}
=
\begin{pmatrix}
s_i g_i & O \\ s_i & s_i
\end{pmatrix},
\]
and hence
\begin{equation}\label{eq:triangular}
P_i^{-1} B P_i =
\begin{pmatrix}
s_i & s_i \\ s_i g_i - g_{i+1} s_i & -g_{i+1} s_i
\end{pmatrix}
=
\begin{pmatrix}
s_i & s_i \\ O & -s_i g_i
\end{pmatrix},
\end{equation}
where the last equality uses \eqref{eq:key} in the form
$g_{i+1} s_i = s_i g_i$. The block upper triangular form
\eqref{eq:triangular} gives
\[
\det(z I_{2N} - B) = \det(z I - s_i)\,\det(z I + s_i g_i),
\]
which combined with \eqref{eq:split} yields \eqref{eq:charpoly}. We also
record, for later use, that conjugating by $s_i$ and using
\eqref{eq:key} once more gives
$s_i(z I + s_i g_i)s_i^{-1} = z I + s_i g_{i+1}$, so that
$\spec(s_i g_{i+1}) = \spec(s_i g_i)$.
\end{proof}

\begin{remark}[relation to the universal quadratic of
Bigelow--Tian]\label{rem:universal}
Bigelow and Tian observed that the \emph{universal} Long--Moody matrices
$\phi(\sigma_i)$, with entries in the noncommutative group ring
$\mathbb{Z}[F_n \rtimes B_n]$, satisfy a quadratic relation
$(\phi(\sigma_i) + \sigma_i x_i)(\phi(\sigma_i) - \sigma_i) = 0$
\cite[Proposition 5.1]{BigelowTian08}, and noted that it does not yield a
Hecke relation after evaluation, since its ``scalars'' do not commute
with the matrix entries. Theorem \ref{thm:charpoly} identifies exactly
what survives evaluation by a representation $\rho$: the noncommutative
quadratic descends to the factorization \eqref{eq:charpoly} of the
characteristic polynomial, with the two factors contributing the spectra
$\spec(s_i)$ and $-\spec(s_i g_i)$, while the triangular form
\eqref{eq:triangular} is its matrix-level counterpart, in which the
off-diagonal coupling block records precisely the failure of the
scalars to commute.
\end{remark}

\begin{remark}\label{rem:onlykey}
The only representation-theoretic input in the proof is the single relation
\eqref{eq:key}, i.e., $\theta_{\sigma_i}(x_i) = x_{i+1}$, together with
the specific form of the local block $B$. In particular, the
triangularization \eqref{eq:triangular} persists for any
Long--Moody-type construction whose evaluated local block has this form
and whose input satisfies $g_{i+1} s_i = s_i g_i$; for other Wada-type
actions \cite{Wada92} the Fox Jacobian, and hence the local block
itself, may differ. Of course, \eqref{eq:key} is only
one of the relations imposed on a representation of $F_n \rtimes B_n$ by
the Artin action \eqref{eq:artin}; throughout, $\rho$ is assumed to be a
representation of the full semidirect product.
\end{remark}

\begin{remark}[independence of the twist]\label{rem:twist}
Write $g_{i+1} = C_i g_i$. By \eqref{eq:key} the ``twist'' $C_i$ is not a
free parameter: it is the commutator
\[
C_i = g_{i+1} g_i^{-1} = s_i g_i s_i^{-1} g_i^{-1} = [s_i, g_i]
\qquad \text{(with the convention } [a, b] := a b a^{-1} b^{-1}\text{)}.
\]
Moreover, $\spec(s_i g_{i+1}) = \spec(s_i C_i g_i) = \spec(s_i g_i)$ by
the conjugation identity in the last step of the proof above.
Consequently, the spectrum of the braid generators under the (twisted)
Long--Moody construction depends only on the pair
$\bigl(\spec(s_i), \spec(s_i g_i)\bigr)$ and cannot be altered by any
choice of $C_i$. We stress that this independence concerns the
characteristic polynomial of the \emph{intermediate} representation
$\rLMl$. The subspaces $K$ and $L$, and hence the KLM quotient, depend
on the matrices $g_1, \dots, g_n$ themselves and not only on the spectra
above: for instance, for $\lambda \neq 1$ one has, by \eqref{eq:Lchar},
$\dim L = \dim \Ker(\lambda\, g_1 \cdots g_n - I_N)$, which involves the
full product $g_1 \cdots g_n$ and therefore the twists $C_j$. Thus the
$C_i$-independence of the local characteristic polynomial does not
extend verbatim to the quotient.
\end{remark}

\begin{remark}[Jordan structure]\label{rem:jordan}
The conjugated form \eqref{eq:triangular} also controls the Jordan
structure of $\rLMl(\sigma_i)$: the operator is similar to a block upper
triangular matrix with diagonal blocks $s_i$ and $-s_i g_i$, coupled by
the off-diagonal block $s_i$. If $s_i$ and $s_i g_i$ are semisimple and
$\spec(s_i) \cap \bigl(-\spec(s_i g_i)\bigr) = \emptyset$, the coupling
is removed by solving a Sylvester equation and $\rLMl(\sigma_i)$ is
semisimple. When the two spectra overlap, nontrivial Jordan blocks can
occur even for semisimple input (Remark \ref{rem:minusone}). Hence the
characteristic polynomial \eqref{eq:charpoly} alone does not decide the
Hecke property; it does under the additional hypothesis that the braid
generators act semisimply on the quotient---a hypothesis which is
automatic whenever a Hecke relation with two distinct parameters is the
target, since such a relation has squarefree minimal polynomial.
\end{remark}

We record the effect of passing to the KLM quotient. If
$U \subseteq V^{\oplus n}$ is an invariant subspace of an operator $A$,
then $\det(z - A) = \det(z - A|_{U})\cdot\det(z - \bar A)$, where
$\bar A$ is the induced operator on $V^{\oplus n}/U$. Applying this to
$U = K + L$ gives:

\begin{corollary}\label{cor:quotient}
As multisets,
\[
\spec\bigl(\rKLM(\sigma_i)\bigr)
= \Bigl[\spec(s_i)^{\uplus(n-1)} \uplus \bigl(-\spec(s_i g_i)\bigr)\Bigr]
\ \setminus\ \spec\Bigl(\rLMl(\sigma_i)\big|_{K+L}\Bigr).
\]
In particular, if $\rKLM(\sigma_i)$ has at most two distinct eigenvalues
$\{a, b\}$, then
\[
\spec(s_i) \cup \bigl(-\spec(s_i g_i)\bigr) \subseteq
\{a, b\} \cup \spec\Bigl(\rLMl(\sigma_i)\big|_{K+L}\Bigr).
\]
\end{corollary}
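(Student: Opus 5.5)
The plan is to apply the invariant-subspace factorization stated just before the corollary to $A = \rLMl(\sigma_i)$ and $U = K + L$, and then read off multisets of roots. Definition \ref{def:KLM} says $K + L$ is $\rLMl$-invariant, so in particular it is $\rLMl(\sigma_i)$-stable. I would pick a basis of $V^{\oplus n}$ that extends a basis of $K+L$. In this basis $\rLMl(\sigma_i)$ is block upper triangular: the upper-left block is the restriction to $K+L$ and the lower-right block is the induced operator on $V^{\oplus n}/(K+L)$, which is by definition $\rKLM(\sigma_i)$. This gives
\[
\det\bigl(z I_{Nn} - \rLMl(\sigma_i)\bigr) = \det\bigl(z - \rLMl(\sigma_i)|_{K+L}\bigr)\,\det\bigl(z - \rKLM(\sigma_i)\bigr).
\]

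Next I substitute Theorem \ref{thm:charpoly}, i.e.\ \eqref{eq:charpoly} in its multiset form \eqref{eq:multiset}, for the left-hand side. All three polynomials are monic, so I pass to an algebraic closure of $\kk$ and factor them into linear factors. By unique factorization in $\overline{\kk}[z]$, the multiset of roots of a product is the multiset union of the root multisets of the factors. The multiset of roots of $\det(z - \rKLM(\sigma_i))$ is therefore obtained from $\spec(s_i)^{\uplus(n-1)} \uplus (-\spec(s_i g_i))$ by removing, with multiplicity, the roots of $\det(z - \rLMl(\sigma_i)|_{K+L})$. This removal is well defined because that multiset is contained in the first one, and this is exactly the displayed multiset difference.

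For the ``in particular'' statement, assume $\spec(\rKLM(\sigma_i)) \subseteq \{a,b\}$. Each element of $\spec(s_i) \cup (-\spec(s_i g_i))$ is a root of the left-hand side. Since $n \ge 2$, every eigenvalue of $s_i$ occurs with positive multiplicity, and every element of $-\spec(s_i g_i)$ occurs as well. A root of the product must be a root of one of the two factors, so it lies either in $\spec(\rLMl(\sigma_i)|_{K+L})$ or in $\spec(\rKLM(\sigma_i)) \subseteq \{a,b\}$, which gives the inclusion.

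I do not expect a real obstacle here. The only point that needs care is the bookkeeping in the multiset difference, namely that multiplicities are subtracted rather than sets removed, and unique factorization of monic polynomials over $\overline{\kk}$ settles that. I would also note that the argument does not need $\lambda \neq 1$, because it uses only the invariance of $K+L$ and not the description \eqref{eq:Lchar} of $L$.
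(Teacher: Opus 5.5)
Your proposal is correct and follows the paper's argument. The paper likewise factors $\det(zI-\rLMl(\sigma_i))$ along the invariant subspace $K+L$ as the product of the characteristic polynomials of the restriction and of the induced quotient operator, and then substitutes Theorem~\ref{thm:charpoly}. Your extra care with multiplicities and your observation that $\lambda \neq 1$ is not needed are both accurate.
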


Thus the convolution parameter $\lambda$, which enters only through
$K + L$, can remove eigenvalues from the list \eqref{eq:multiset} but can
never create new ones. For $n \geq 3$ this makes the two-distinct-root
Hecke property of the scalar family independent of $\lambda$ (Theorem
\ref{thm:classification}(iii)); for $n = 2$, resonance can remove an
obstruction and thereby create it (Theorem
\ref{thm:classification}(iv)).

\begin{remark}[exact criterion on the quotient]\label{rem:exact}
Corollary \ref{cor:quotient} accounts for eigenvalues with algebraic
multiplicity; the Jordan structure is handled exactly by the following
observation. For any polynomial $p$, the induced operator satisfies
$p\bigl(\rKLM(\sigma_i)\bigr) = 0$ if and only if
\[
p\bigl(\rLMl(\sigma_i)\bigr)\, V^{\oplus n} \;\subseteq\; K + L .
\]
In particular, the two-distinct-root Hecke condition on the quotient is
equivalent to the containment
$\bigl(\rLMl(\sigma_i) - a\bigr)\bigl(\rLMl(\sigma_i) - b\bigr)
V^{\oplus n} \subseteq K + L$, and the characteristic polynomial of
$\rKLM(\sigma_i)$ is the exact quotient
$\chi_{\rLMl(\sigma_i)} \big/ \chi_{\rLMl(\sigma_i)|_{K+L}}$ of
characteristic polynomials, the divisibility being guaranteed by the
invariance of $K + L$. Remark \ref{rem:absorb} below is precisely the
scalar-input specialization of this criterion.
\end{remark}

\section{Scalar braid part: classification for semisimple
input}\label{sec:hecke}

In this section, $\kk$ is algebraically closed of characteristic zero.
We consider input representations whose braid part is scalar, and we
assume throughout that $g$ is semisimple. This is a genuine hypothesis
rather than a normalization. A relation $(T - a)(T - b) = 0$ with
$a \neq b$ does force the braid generators to act semisimply on the
quotient (see Section \ref{sec:general}), but semisimplicity of the
output does not imply semisimplicity of the input: the quotient by $K$
may annihilate the nilpotent part of $g$ and return a semisimple
operator. At nonresonant parameters, for $\mu \neq 1$ the hypothesis is
recovered as a consequence
rather than imposed (Remark \ref{rem:absorb}); at $\mu = 1$, however, a
family of non-semisimple inputs does produce Hecke modules, and it lies
outside Theorem \ref{thm:classification}. That family is recorded in
Remark \ref{rem:muone}.

\begin{lemma}\label{lem:scalarrep}
Let $g \in \GL(V)$ and $c \in \kk^{\times}$. Then the assignment
\[
\rho_{g,c}(x_j) := g \quad (1 \le j \le n), \qquad
\rho_{g,c}(\sigma_i) := c\, I_N \quad (1 \le i \le n-1)
\]
defines a representation
$\rho_{g,c} \colon F_n \rtimes B_n \to \GL(V)$.
Conversely, if $\rho$ is a representation of $F_n \rtimes B_n$ with
$\rho(\sigma_i) = c_i I_N$ scalar for all $i$, then $c_1 = \dots = c_{n-1}$
and $\rho(x_1) = \dots = \rho(x_n)$; that is, $\rho = \rho_{g,c}$ for some
$g$ and $c$. In the notation of Remark \ref{rem:twist}, a scalar braid
part forces $C_i = I_N$; the converse fails, since $C_i = I_N$ expresses
only that $s_i$ and $g_i$ commute.
\end{lemma}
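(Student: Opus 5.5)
The plan is to check the defining relations of $F_n \rtimes B_n$ directly. The group $F_n \rtimes_\theta B_n$ is presented by the generators $x_1,\dots,x_n,\sigma_1,\dots,\sigma_{n-1}$. Its relations are the braid relations among the $\sigma_i$, together with the conjugation relations $\sigma_i x_j \sigma_i^{-1} = \theta_{\sigma_i}(x_j)$ from \eqref{eq:artin}; the $x_j$ satisfy no relations among themselves. It therefore suffices to check that the proposed images satisfy these relations.

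\textbf{Forward direction.} The braid relations hold trivially, since all $\sigma_i$ go to the same scalar $cI_N$, which commutes with everything. For the conjugation relations, conjugation by $cI_N$ is the identity, so the left side becomes $\rho(x_j) = g$. The right side is $g$ for $j\neq i,i+1$, is $g$ for $j = i$, and for $j = i+1$ equals $g^{-1} g\, g = g$. Both sides agree in every case, so $\rho_{g,c}$ is well defined.

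\textbf{Converse.} Suppose $s_i = c_i I_N$ is scalar. Then \eqref{eq:key} gives $g_{i+1} = s_i g_i s_i^{-1} = g_i$ for each $i$, hence $g_1 = \dots = g_n$. To get $c_i = c_{i+1}$, use the braid relation $\sigma_i\sigma_{i+1}\sigma_i = \sigma_{i+1}\sigma_i\sigma_{i+1}$. Applying $\rho$ yields $c_i^2 c_{i+1} = c_i c_{i+1}^2$, and since all $c_i$ are nonzero, this forces $c_i = c_{i+1}$. Together with the forward direction, $\rho$ equals $\rho_{g,c}$.

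\textbf{Final remark.} In the notation of Remark~\ref{rem:twist}, a scalar braid part gives $C_i = g_{i+1}g_i^{-1} = I_N$ immediately. Conversely, $C_i = I_N$ means $s_i g_i s_i^{-1} = g_i$, which only says that $s_i$ commutes with $g_i$. A non-scalar example shows the converse of the statement fails: take $N=1$, $n = 2$, and $s_1 = c$ arbitrary. Every $1\times1$ matrix is scalar, so this is trivial there. A better example is $g = I_N$ with $s_i$ any non-scalar solution of the braid relations, for instance the diagonal $s_i = \operatorname{diag}(1,2)$ for all $i$. Here $g_i = I$ commutes with each $s_i$, all relations of the semidirect product hold, and $C_i = I_N$ although the $s_i$ are not scalar.

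No step is a real obstacle. The only point needing care is to state that the presentation of the semidirect product consists of exactly these relations, which is standard.
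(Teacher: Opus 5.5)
Your proof is correct and follows the paper's argument. For the forward direction you check the conjugation relations and the braid relations on generators. For the converse you use \eqref{eq:key} to get $g_{i+1}=g_i$, and the braid relation, which reduces to $c_i^2c_{i+1}=c_ic_{i+1}^2$, to get $c_i=c_{i+1}$. Your example $g=I_2$, $s_i=\operatorname{diag}(1,2)$ for all $i$ is a valid concrete witness that $C_i=I_N$ does not force a scalar braid part, a point the paper justifies only in words; the abandoned $N=1$ attempt before it should simply be deleted.
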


\begin{proof}
For the first statement, it suffices to verify the relations
$\sigma_i x_j \sigma_i^{-1} = \theta_{\sigma_i}(x_j)$ under $\rho_{g,c}$.
For $j = i$ we need $c g c^{-1} = g$; for $j = i+1$ we need
$c g c^{-1} = g^{-1} g g$; both hold trivially, as do the remaining cases
and the braid relations among the scalars $c I_N$. For the converse,
\eqref{eq:key} with $s_i = c_i I_N$ gives $g_{i+1} = g_i$ for all $i$, and
the braid relation $\sigma_i \sigma_{i+1}\sigma_i =
\sigma_{i+1}\sigma_i\sigma_{i+1}$ forces $c_i = c_{i+1}$.
\end{proof}

Since all the block entries of $\rDR(x_j)$ and $\rLM(\sigma_i)$ built from
$\rho_{g,c}$ are polynomials in $g$ (with coefficients in
$\kk[\lambda, c]$), the eigenspace decomposition of a semisimple $g$
decomposes the entire construction:

\begin{lemma}\label{lem:decomp}
Let $g$ be semisimple with eigenspace decomposition
$V = \bigoplus_{\nu \in \spec(g)} \Ker(g - \nu I)$, and let
$m_{\nu} = \dim \Ker(g - \nu I)$. Then each subspace $\Ker(g - \nu I)^{\oplus n} \subseteq
V^{\oplus n}$ is invariant under $\rLMl(x_j)$ and $\rLMl(\sigma_i)$ for all
$i, j$; moreover $K = \bigoplus_{\nu} (K \cap \Ker(g - \nu I)^{\oplus n})$ and
$L = \bigoplus_{\nu} (L \cap \Ker(g - \nu I)^{\oplus n})$. Consequently,
\[
\rKLM \;\cong\; \bigoplus_{\nu \in \spec(g)}
\bigl(\rho^{(\nu)}\bigr)^{\oplus m_{\nu}},
\]
where $\rho^{(\nu)}$ denotes the KLM construction applied to the rank-one
input $\rho_{\nu, c} \colon F_n \rtimes B_n \to \GL_1(\kk)$.
\end{lemma}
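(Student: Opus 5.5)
Let $V_\nu := \Ker(g - \nu I)$, so $V = \bigoplus_\nu V_\nu$ and $V^{\oplus n} = \bigoplus_\nu V_\nu^{\oplus n}$. The proof is by direct inspection of the block entries, using that every block of every operator in the construction is a polynomial in $g$.

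\textbf{Invariance.} For $\rho_{g,c}$ we have $g_j = g$ for all $j$ and $s_i = cI_N$. Hence every block of $\rDR(x_j)$ lies in $\{\lambda(g-1), \lambda g, g-1, I, O\}$, and every block of $\rLMl(\sigma_i)$ lies in $\{cI, cg, c(I-g), O\}$. All of these are polynomials in $g$, so each preserves $V_\nu$. A block matrix whose blocks all preserve $V_\nu$ preserves $V_\nu^{\oplus n}$. Moreover, on $V_\nu^{\oplus n}\cong (V_\nu)\otimes \kk^n$ each such operator acts as $A_\nu\otimes I_{V_\nu}$. Here $A_\nu$ is the $n\times n$ scalar matrix obtained by substituting $g=\nu$, which is exactly the matrix of the rank-one construction for $\rho_{\nu,c}$.

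\textbf{Splitting of $K$ and $L$.} We have $K = \Ker(g-1)^{\oplus n} = V_1^{\oplus n}$, which is already one summand. So $K\cap V_\nu^{\oplus n}$ equals $V_1^{\oplus n}$ if $\nu=1$ and is $0$ otherwise, and the claimed decomposition of $K$ holds.

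For $L$, write $L=\bigcap_j \Ker(\rLMl(x_j)-I)$. Each $\rLMl(x_j)-I$ is block diagonal with respect to $\bigoplus_\nu V_\nu^{\oplus n}$, so its kernel is the direct sum of the kernels of its restrictions. An intersection of subspaces that each split compatibly with a fixed direct sum decomposition again splits. Hence $L=\bigoplus_\nu (L\cap V_\nu^{\oplus n})$. This argument does not need $\lambda\neq 1$ or \eqref{eq:Lchar}, so it covers all $\lambda$.

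\textbf{Identifying the quotient.} Since $K+L=\bigoplus_\nu\bigl((K+L)\cap V_\nu^{\oplus n}\bigr)$, we get
\[
V^{\oplus n}/(K+L)\cong\bigoplus_\nu V_\nu^{\oplus n}/\bigl((K+L)\cap V_\nu^{\oplus n}\bigr),
\]
equivariantly for the whole group. It remains to identify each summand with $(\rho^{(\nu)})^{\oplus m_\nu}$. Under the tensor identification $V_\nu^{\oplus n}\cong\kk^n\otimes V_\nu$:
\begin{itemize}
\item $K\cap V_\nu^{\oplus n}$ corresponds to $K^{(\nu)}\otimes V_\nu$, where $K^{(\nu)}$ is $\kk^n$ if $\nu=1$ and $0$ otherwise, which is the rank-one $K$.
\item $L\cap V_\nu^{\oplus n}=\bigcap_j\Ker\bigl((A^{(\nu)}_{x_j}-I)\otimes I\bigr)=L^{(\nu)}\otimes V_\nu$.
\end{itemize}
So the summand is $\bigl(\kk^n/(K^{(\nu)}+L^{(\nu)})\bigr)\otimes V_\nu$, with the group acting on the first factor only. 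This is $(\rho^{(\nu)})^{\oplus m_\nu}$.

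The only step requiring care is the compatibility of intersections with the direct sum, for which the block-diagonal kernel argument suffices. There is no real obstacle.
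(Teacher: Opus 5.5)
Your proof is correct and follows essentially the same route as the paper: invariance because every block entry is a polynomial in $g$, $K = \Ker(g - I)^{\oplus n}$ being itself one of the summands, $L$ splitting as an intersection of kernels of summand-preserving operators, and the identification of each $\nu$-summand with $m_\nu$ copies of the rank-one construction. Your tensor identification $\Ker(g-\nu I)^{\oplus n} \cong \kk^n \otimes \Ker(g - \nu I)$ is a basis-free version of the paper's choice of a basis of each eigenspace, and it makes explicit what the paper leaves implicit: the argument needs no hypothesis on $\lambda$.
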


\begin{proof}
Invariance is immediate since every block entry is a polynomial in $g$ and
$\Ker(g - \nu I)$ is $g$-invariant. For $K$, note that
$K = \Ker(g - I)^{\oplus n}$ (interpreted as $0$ if
$1 \notin \spec(g)$), which is the direct sum of its intersections with
the summands. For $L$, the intersection of kernels of operators preserving
each summand decomposes accordingly. Choosing a basis of each $\Ker(g - \nu I)$
identifies the $\nu$-summand of the construction with $m_{\nu}$ copies of
the rank-one construction with $g = \nu$.
\end{proof}

The rank-one building blocks are computed completely as follows.

\begin{lemma}\label{lem:rankone}
Let $N = 1$, $g = \nu \in \kk^{\times}$, $s_i = c \in \kk^{\times}$, and
$\lambda \in \kk^{\times} \setminus \{1\}$. Write
$S_i := \rLMl(\sigma_i) \in \GL_n(\kk)$. Parts (a) and (b) describe the
removed subspaces $K$ and $L$; part (c) describes the operator $S_i$ on
$\kk^n$ itself, independently of $\lambda$; part (d) describes the
operator induced on the quotient at resonance.
\begin{enumerate}
\item[(a)] If $\nu = 1$, then $K = \kk^n$ and $\rho^{(1)} = 0$.
\item[(b)] If $\nu \neq 1$, then $K = 0$, and $L \neq 0$ if and only if
$\lambda \nu^n = 1$; in that case
$L = \kk\,(\nu^{n-1}, \dots, \nu, 1)^{T}$ is one-dimensional, and
every $S_i$ acts on $L$ by the scalar $c$.
\item[(c)] If $\nu \neq 1$, then
$\det(z - S_i) = (z - c)^{n-1}(z + c\nu)$. If $\nu \neq -1$, the
matrix $S_i$ is semisimple with minimal polynomial
$(T - c)(T + c\nu)$; if $\nu = -1$, then $S_i$ has a nontrivial Jordan
block and minimal polynomial $(T - c)^2$, and
$\operatorname{im}(S_i - cI) = \kk\,(\varepsilon_i - \varepsilon_{i+1})$,
where $\varepsilon_1, \dots, \varepsilon_n$ is the standard basis.
\item[(d)] Suppose $\nu \neq 1$ and $\lambda \nu^n = 1$, and let
$\bar S_i$ denote the operator induced by $S_i$ on $\kk^n / L$. If
$n = 2$, then $\kk^2/L$ is one-dimensional and $\bar S_1 = -c\nu$. If
$n \geq 3$, the minimal polynomial of $\bar S_i$ is $(T - c)(T + c\nu)$
for $\nu \neq -1$, and $(T - c)^2$ for $\nu = -1$ (the latter requiring
$n$ odd and $\lambda = -1$).
\end{enumerate}
\end{lemma}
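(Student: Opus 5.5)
The plan is to compute everything explicitly in rank one, where $S_i$ is the identity except for the $2\times 2$ block $c\begin{pmatrix}0&\nu\\1&1-\nu\end{pmatrix}$ in rows and columns $i,i+1$. Each of (a)--(d) then reduces to elementary linear algebra on $\kk^n$.

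For (a) and (b) we use the definitions directly. If $\nu=1$, then $\Ker(g-1)=\kk$, so $K=\kk^n$ and the quotient is zero. If $\nu\neq1$, then $K=0$. Since $\lambda\neq1$, the chain \eqref{eq:Lchar} gives $v_k=\nu v_{k+1}$ and $v_n=\lambda\nu^n v_n$. Hence $L\neq0$ if and only if $\lambda\nu^n=1$, and in that case $L$ is spanned by $(\nu^{n-1},\dots,\nu,1)^T$. To see that $S_i$ acts on $L$ by $c$, it suffices to check the local block on the pair $(\nu w,w)$ with $w=\nu^{n-i-1}$:
\[
c\bigl(\nu w,\ \nu w+(1-\nu)w\bigr)=c(\nu w,w).
\]
The other coordinates are simply multiplied by $c$.

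For (c), the characteristic polynomial follows from Theorem \ref{thm:charpoly}. If $\nu\neq-1$, the eigenvalues $c$ and $-c\nu$ are distinct. The $2\times2$ block then has distinct eigenvalues and is diagonalizable, while the remaining diagonal entries equal $c$, so $S_i$ is semisimple with minimal polynomial $(T-c)(T+c\nu)$. If $\nu=-1$, then
\[
S_i-cI = c\begin{pmatrix}-1&-1\\1&1\end{pmatrix}
\]
in the $i,i+1$ block and zero elsewhere. This matrix has rank one with image spanned by $\varepsilon_i-\varepsilon_{i+1}$, and it squares to zero. Hence the minimal polynomial is $(T-c)^2$ and there is a nontrivial Jordan block.

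For (d), when $n=2$ the quotient $\kk^2/L$ is one-dimensional. Since $\det S_1=-c^2\nu$ and $S_1$ acts by $c$ on $L$, the induced scalar is $-c\nu$. For $n\ge3$ and $\nu\neq-1$, $S_i$ is semisimple with minimal polynomial $(T-c)(T+c\nu)$, so the induced operator satisfies the same polynomial. Equality of minimal polynomials requires both roots to survive. By Corollary \ref{cor:quotient}, the quotient spectrum is $\{c\}^{n-2}\uplus\{-c\nu\}$, and $n-2\ge1$, so both roots occur and the minimal polynomial is exactly $(T-c)(T+c\nu)$.

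For $\nu=-1$, the main point is to show that the Jordan block survives. By Remark \ref{rem:exact}, it suffices to show $(S_i-c)\kk^n=\kk(\varepsilon_i-\varepsilon_{i+1})\not\subseteq L$. Here $L$ is spanned by $((-1)^{n-1},\dots,-1,1)$, which has all coordinates nonzero. For $n\ge3$ it is therefore not proportional to $\varepsilon_i-\varepsilon_{i+1}$, which has a zero coordinate. So $(\bar S_i-c)\neq0$, while $(\bar S_i-c)^2=0$ is inherited from $S_i$.

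Finally, the resonance condition $\lambda(-1)^n=1$ with $\lambda\neq1$ forces $n$ odd and $\lambda=-1$. I expect no step to be a real obstacle. The only care needed is the survival of the non-identity root on the quotient, which the multiset count handles.
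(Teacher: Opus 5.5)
Your proof is correct and follows the paper's argument essentially step for step. You use the chain characterization \eqref{eq:Lchar} for $L$ and the explicit $2\times 2$ block for (c). For $n\ge 3$, $\nu\neq-1$ you use the multiset count of Corollary~\ref{cor:quotient}, and for $\nu=-1$ the non-containment $\kk(\varepsilon_i-\varepsilon_{i+1})\not\subseteq L$; your determinant shortcut for $n=2$ is just the constant term of the paper's quotient characteristic polynomial $(z-c)^{n-2}(z+c\nu)$.

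One wording slip: $S_i$ is $c$ times the identity outside the block, not the identity itself. You do in fact use the correct form later.
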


\begin{proof}
(a) For $\nu = 1$ the off-diagonal entries $\lambda(\nu - 1)$ and
$\nu - 1$ of $\rDR(x_j)$ vanish, and $\Ker(g - 1) = \kk$, so
$K = \kk^n$ and the quotient is zero.

(b) $K = 0$ is clear. By the characterization \eqref{eq:Lchar}, $v \in L$
if and only if $v_k = \nu v_{k+1}$ for $k \le n-1$ and
$v_n = \lambda \nu^n v_n$; the second condition has a nonzero solution
exactly when $\lambda \nu^n = 1$, and then $v$ is proportional to
$(\nu^{n-1}, \dots, \nu, 1)^{T}$. For the action of $S_i$ on such $v$:
the $i$-th entry of $S_i v$ is $c\,\nu\, v_{i+1} = c\, v_i$; the
$(i{+}1)$-st entry is
$c\, v_i + c(1 - \nu)v_{i+1} = c\,\nu v_{i+1} + c v_{i+1} - c\nu v_{i+1}
= c\, v_{i+1}$; all other entries are $c\, v_k$. Hence $S_i v = c v$.

(c) The characteristic polynomial is the case $N = 1$ of Theorem
\ref{thm:charpoly}. The matrix $S_i$ equals $c$ times the identity except
for the $2 \times 2$ principal submatrix
$\bigl(\begin{smallmatrix} 0 & \nu \\ 1 & 1 - \nu \end{smallmatrix}\bigr)$
in rows and columns $i, i+1$, whose eigenvalues are $1$ and $-\nu$. If
$\nu \neq -1$ these are distinct, the $2 \times 2$ block is
diagonalizable, and $S_i$ is semisimple with the two eigenvalues $c$ and
$-c\nu$; since both occur ($n \geq 2$), the minimal polynomial is
$(T - c)(T + c\nu)$. If $\nu = -1$, the block equals
$\bigl(\begin{smallmatrix} 0 & -1 \\ 1 & 2 \end{smallmatrix}\bigr)$, whose
characteristic polynomial is $(z - 1)^2$ while the block is not the
identity; hence it is a nontrivial Jordan block and the minimal polynomial
of $S_i$ is $(T - c)^2$. In that case
$S_i - cI = c\,\bigl(\begin{smallmatrix} -1 & -1 \\ 1 & 1
\end{smallmatrix}\bigr)$ in rows and columns $i, i+1$ and zero
elsewhere, whose image is spanned by $\varepsilon_i - \varepsilon_{i+1}$.

(d) Since $S_i$ acts on $L$ by the scalar $c$ (part (b)), the
characteristic polynomial of $\bar S_i$ is
$(z - c)^{n-2}(z + c\nu)$. For $n = 2$ this equals $z + c\nu$, so
$\bar S_1$ is the scalar $-c\nu$. Let $n \geq 3$. If $\nu \neq -1$,
then $S_i$ is semisimple by (c), hence so is the induced operator
$\bar S_i$, and both eigenvalues occur because $n - 2 \geq 1$; the
minimal polynomial is therefore $(T - c)(T + c\nu)$. If $\nu = -1$
(so that $\lambda(-1)^n = 1$ with $\lambda \neq 1$ forces $n$ odd and
$\lambda = -1$), then $(S_i - cI)^2 = 0$ passes to the quotient, so
the minimal polynomial of $\bar S_i$ divides $(T - c)^2$; it does not
divide $T - c$, because
$\operatorname{im}(S_i - cI) = \kk(\varepsilon_i - \varepsilon_{i+1})$
is not contained in $L = \kk\,(\nu^{n-1}, \dots, \nu, 1)^{T}$, all of
whose coordinates are nonzero while $\varepsilon_i - \varepsilon_{i+1}$
is supported on two coordinates and $n \geq 3$.
\end{proof}

\pagebreak[2]
\begin{theorem}\label{thm:classification}
Let $\kk$ be algebraically closed of characteristic zero, let
$g \in \GL(V)$ be semisimple with $\dim \Ker(g - 1) = k_1$, let
$c \in \kk^{\times}$, and let $\lambda \in \kk^{\times} \setminus \{1\}$.
Write $E := \spec(g) \setminus \{1\}$, $m_\nu := \dim\Ker(g - \nu I)$,
and $\ell := \dim \Ker(\lambda g^n - I_N)$. Consider $\rho = \rho_{g,c}$
as in Lemma \ref{lem:scalarrep} and the resulting KLM representation
$\rKLM$ of $F_n \rtimes B_n$ on $V^{\oplus n}/(K + L)$. Set
\[
p_\nu(T) :=
\begin{cases}
(T - c)(T + c\nu), & \nu \neq -1,\\
(T - c)^2, & \nu = -1.
\end{cases}
\]
Here $\operatorname{lcm}$ denotes the least common multiple of monic
polynomials, normalized to be monic; we use throughout the standard fact
that the minimal polynomial of a direct sum of operators is the least
common multiple of the minimal polynomials of the summands.
Then:
\begin{enumerate}
\item[(i)] $\dim V^{\oplus n}/(K + L) = n(N - k_1) - \ell$.
\item[(ii)] If $E \subseteq \{\mu\}$ with
$\mu \in \kk^{\times} \setminus \{1, -1\}$, then for every
$\lambda \in \kk^{\times}\setminus\{1\}$,
\[
\bigl(\rKLM(\sigma_i) - c\bigr)\bigl(\rKLM(\sigma_i) + c\mu\bigr) = 0,
\qquad i = 1, \dots, n-1,
\]
so that $\sigma_i \mapsto c^{-1}\rKLM(\sigma_i)$ defines a representation
of the Iwahori--Hecke algebra $H_n(\mu)$ of dimension $n\,m_{\mu} - \ell$.
If moreover $m_{\mu} \neq 0$, the operators $\rKLM(\sigma_i)$ are
semisimple with eigenvalue set contained in $\{c, -c\mu\}$; both
eigenvalues occur if and only if $(n-1)\,m_\mu > \ell$, that is, unless
$n = 2$ and $\lambda = \mu^{-2}$.
\item[(iii)] (Complete classification, $n \geq 3$.) Suppose $n \geq 3$
and $E \neq \emptyset$. Then, for every
$\lambda \in \kk^{\times}\setminus\{1\}$, the minimal polynomial of
$\rKLM(\sigma_i)$ equals
$\operatorname{lcm}_{\nu \in E}\, p_\nu$. In particular, it is
quadratic with two distinct roots if and only if $E = \{\mu\}$ with
$\mu \neq -1$, in which case the roots are $c$ and $-c\mu$.
\item[(iv)] (Complete classification, $n = 2$.) Suppose $n = 2$ and
$E \neq \emptyset$, and partition $E = R_\lambda \sqcup U_\lambda$ with
$R_\lambda := \{\nu \in E \,;\, \lambda\nu^2 = 1\}$. Then the minimal
polynomial of $\rKLM(\sigma_1)$ equals
\[
\operatorname{lcm}\Bigl(\{\,T + c\nu \,;\, \nu \in R_\lambda\,\}
\cup \{\,p_\nu \,;\, \nu \in U_\lambda\,\}\Bigr).
\]
It is quadratic with two distinct roots if and only if either
\begin{itemize}
\item[(a)] $E = \{\mu\}$ with $\mu \neq -1$ and $\lambda\mu^2 \neq 1$,
the roots being $c$ and $-c\mu$; or
\item[(b)] $E = \{\nu, -\nu\}$ and $\lambda = \nu^{-2}$, the roots
being $\pm c\nu$. In this case
$\widetilde T_1 := (c\nu)^{-1}\rKLM(\sigma_1)$ satisfies
$(\widetilde T_1 - 1)(\widetilde T_1 + 1) = 0$, so that this additional
normalization exhibits an $H_2(1)$-module.
\end{itemize}
\item[(v)] (Degenerations.) If $E = \{-1\}$, the minimal polynomial is
$(T - c)^2$: the representation factors through the repeated-root
specialization $H_n(-1)$, but through no Hecke relation with two
distinct parameters. If $n = 2$, $E = \{\mu\}$ and $\lambda\mu^2 = 1$,
then $\rKLM(\sigma_1)$ is the scalar $-c\mu$, with linear minimal
polynomial; it still satisfies $(T - c)(T + c\mu) = 0$ and defines an
$H_2(\mu)$-module.
\end{enumerate}
\end{theorem}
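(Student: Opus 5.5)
The whole argument reduces, via Lemma \ref{lem:decomp}, to the rank-one building blocks of Lemma \ref{lem:rankone}. Since $g$ is semisimple, we have $\rKLM \cong \bigoplus_{\nu} (\rho^{(\nu)})^{\oplus m_\nu}$. The summand for $\nu = 1$ vanishes by Lemma \ref{lem:rankone}(a). The minimal polynomial of $\rKLM(\sigma_i)$ is therefore the lcm of the minimal polynomials of the rank-one quotients $\rho^{(\nu)}(\sigma_i)$ for $\nu \in E$.

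For (i), the dimension count comes from $K = \Ker(g-1)^{\oplus n}$, of dimension $nk_1$. By \eqref{eq:Lchar}, $L$ is isomorphic to $\Ker(\lambda g^n - I)$ via $v \mapsto v_n$. Since $1^n\lambda \neq 1$, this space lies in the part of $V$ with $\nu \neq 1$, so $K \cap L = 0$ and $\dim(K+L) = nk_1 + \ell$. Equivalently, $\ell = \sum_{\nu \in E,\, \lambda\nu^n = 1} m_\nu$, which is consistent with Lemma \ref{lem:rankone}(b).

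Next I determine the minimal polynomial of each rank-one quotient $\rho^{(\nu)}(\sigma_i)$, $\nu \in E$.
\begin{enumerate}
\item[(1)] At a nonresonant $\nu$ ($\lambda\nu^n \neq 1$) we have $L = 0$, and the quotient is $S_i$ itself. By Lemma \ref{lem:rankone}(c) its minimal polynomial is $p_\nu$.
\item[(2)] At a resonant $\nu$ with $n \geq 3$, Lemma \ref{lem:rankone}(d) again gives $p_\nu$.
\item[(3)] At a resonant $\nu$ with $n = 2$, Lemma \ref{lem:rankone}(d) gives the linear polynomial $T + c\nu$.
\end{enumerate}
Taking the lcm yields the formulas in (iii) and (iv) at once.

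For (ii), every summand has minimal polynomial dividing $(T-c)(T+c\mu)$, which gives the relation. The Hecke statement then follows from the convention in \S\ref{sec:prelim}, and the dimension from (i). Semisimplicity holds because $\mu \neq -1$, so each summand is semisimple with squarefree minimal polynomial. For the multiplicity count, the characteristic polynomial on the quotient is $(z-c)^{(n-1)m_\mu - \ell}(z+c\mu)^{m_\mu}$, using that $L$ carries the eigenvalue $c$ (Lemma \ref{lem:rankone}(b)). Thus $c$ occurs iff $(n-1)m_\mu > \ell$. Since $\ell \in \{0, m_\mu\}$, this fails only when $n = 2$ and $\lambda\mu^2 = 1$.

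The main work is the combinatorics of when the lcm is quadratic with two distinct roots.
\begin{enumerate}
\item[(a)] Suppose $n \geq 3$. Each $p_\nu$ contains the factor $(T-c)$, so the lcm is $(T-c)$ times the product of the distinct factors $(T+c\nu)$, $\nu \neq -1$, together with an extra factor $(T-c)$ if $-1 \in E$. Note that $T + c\nu \neq T - c$ because $\nu \neq -1$. This is squarefree of degree $2$ exactly when $E = \{\mu\}$ with $\mu \neq -1$.
\item[(b)] Suppose $n = 2$. If $U_\lambda \neq \emptyset$, the root $c$ is present, and we need exactly one $\nu \in U_\lambda$ with $\nu \neq -1$. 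Any $\nu' \in R_\lambda$ would add the root $-c\nu'$, which is new unless $\nu' = \nu$ or $\nu' = -1$; the first is impossible since $\nu \in U_\lambda$, and the second is excluded because $-1$ would then be resonant, forcing $\lambda = 1$. Hence $R_\lambda = \emptyset$, which is case (a).
\item[(c)] If $U_\lambda = \emptyset$, the roots are $\{-c\nu : \nu \in R_\lambda\}$, and $R_\lambda$ is contained in the two square roots $\pm\lambda^{-1/2}$. Two distinct roots then force $E = R_\lambda = \{\nu, -\nu\}$ with $\lambda = \nu^{-2}$, which is case (b); the normalization by $c\nu$ is immediate.
\end{enumerate}

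For (v), if $E = \{-1\}$ the lcm is $(T-c)^2$ in every case. This holds for $n \geq 3$ by Lemma \ref{lem:rankone}(d) and for $n = 2$ because $-1$ is nonresonant there. The scalar degeneration is the case $n = 2$, $R_\lambda = E = \{\mu\}$.

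The step I expect to require most care is the resonant bookkeeping for $n = 2$, in particular excluding mixed configurations such as $\nu \in U_\lambda$ together with $-\nu \in R_\lambda$. The observation that resonance for $n = 2$ depends only on $\nu^2$ settles this: $\nu$ and $-\nu$ are always both resonant or both nonresonant.
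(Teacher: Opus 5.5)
Your proposal is correct and follows the paper's proof essentially step for step. Both reduce via Lemma \ref{lem:decomp} to the rank-one blocks, read off their minimal polynomials from Lemma \ref{lem:rankone} (the paper's \eqref{eq:mnu}), and then carry out the same lcm case analysis for $n \geq 3$ and for $n = 2$. The only deviation is in (i), where you compute $\dim K + \dim L$ directly from $L \cong \Ker(\lambda g^n - I)$ and $K \cap L = 0$, rather than summing the one-dimensional losses over the resonant rank-one copies; the two computations are equivalent.
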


\begin{proof}
By Lemma \ref{lem:decomp}, $\rKLM \cong \bigoplus_{\nu}
(\rho^{(\nu)})^{\oplus m_{\nu}}$, and by Lemma \ref{lem:rankone}(a) the
summand $\nu = 1$ vanishes; thus the sum runs over $\nu \in E$, and the
minimal polynomial of $\rKLM(\sigma_i)$ is the least common multiple of
the minimal polynomials of the summands. By Lemma
\ref{lem:rankone}(c),\,(d), the summand at $\nu \in E$ has minimal
polynomial
\begin{equation}\label{eq:mnu}
m_{\nu,\lambda}(T) =
\begin{cases}
p_\nu(T), & \lambda\nu^n \neq 1, \text{ or } \lambda\nu^n = 1 \text{ and }
n \geq 3,\\
T + c\nu, & \lambda\nu^2 = 1 \text{ and } n = 2.
\end{cases}
\end{equation}

(i) Each copy of the summand at $\nu \neq 1$ has dimension
$n - \dim L^{(\nu)}$, where $L^{(\nu)}$ is one-dimensional if
$\lambda \nu^n = 1$ and zero otherwise (Lemma \ref{lem:rankone}(b));
thus each resonant copy loses exactly one dimension. Since $g$ is
semisimple, the number of resonant copies is
$\sum_{\nu:\, \lambda\nu^n = 1} m_{\nu} = \dim\Ker(\lambda g^n - I_N)
= \ell$. Note that $\lambda \neq 1$ excludes $\nu = 1$ from this count.
Summing over $\nu \neq 1$ gives $n(N - k_1) - \ell$.

(ii) By Lemma \ref{lem:rankone}(c), the operator $S_i$ on each
$\mu$-summand satisfies $(S_i - c)(S_i + c\mu) = 0$ already before the
quotient; a polynomial relation satisfied by an operator is inherited by
the induced operator on any quotient by an invariant subspace. Since the
$\nu = 1$ summand contributes zero, the relation holds on the whole
quotient, for every $\lambda$. The braid relations hold because $\rKLM$
is a representation of $F_n \rtimes B_n$, so the rescaled operators
define an $H_n(\mu)$-module by \eqref{eq:heckerel}, of the dimension
computed in (i) with $N - k_1 = m_\mu$. Semisimplicity follows since
$(T - c)(T + c\mu)$ is squarefree. By Remark \ref{rem:scalarcheck}, the
characteristic polynomial on the quotient is
$(z - c)^{(n-1)m_\mu - \ell}(z + c\mu)^{m_\mu}$, so both eigenvalues
occur if and only if $(n-1)m_\mu > \ell$; as $\ell \in \{0, m_\mu\}$
(with $\ell = m_\mu$ exactly at $\lambda = \mu^{-n}$), this fails only
for $n = 2$, $\lambda = \mu^{-2}$.

(iii) For $n \geq 3$, formula \eqref{eq:mnu} gives
$m_{\nu,\lambda} = p_\nu$ for every $\nu \in E$ and every
$\lambda \in \kk^{\times} \setminus \{1\}$, whence the displayed least common multiple. If
$E = \{\mu\}$ with $\mu \neq -1$, the lcm is $(T - c)(T + c\mu)$,
quadratic and squarefree. Conversely, suppose the lcm is quadratic with
two distinct roots. If $-1 \in E$, the factor $(T - c)^2$ makes the lcm
non-squarefree; hence $-1 \notin E$. The roots of the lcm are then
$\{c\} \cup \{-c\nu \,;\, \nu \in E\}$, of cardinality $1 + |E|$ (the
values $-c\nu$ are pairwise distinct and differ from $c$ since
$\nu \neq -1$), so $|E| = 1$.

(iv) The displayed lcm is again immediate from \eqref{eq:mnu}. Suppose
it is quadratic with two distinct roots. If $R_\lambda = \emptyset$,
the argument of (iii) applies verbatim and yields case (a); the
condition $\lambda\mu^2 \neq 1$ is $R_\lambda = \emptyset$. If
$R_\lambda \neq \emptyset$ and $U_\lambda \neq \emptyset$, pick
$\nu \in R_\lambda$ and $\mu' \in U_\lambda$; the lcm is divisible by
$(T - c)(T + c\mu')$ and by $T + c\nu$, and $T + c\nu$ divides
$(T - c)(T + c\mu')$ only if $-c\nu \in \{c, -c\mu'\}$, i.e.
$\nu = -1$ (excluded, since $\lambda\nu^2 = 1$ would force
$\lambda = 1$) or $\nu = \mu'$ (excluded, as $R_\lambda$ and
$U_\lambda$ are disjoint); hence the lcm has degree at least three, a
contradiction. There remains $U_\lambda = \emptyset$, i.e.
$E = R_\lambda \subseteq \{\nu \,;\, \lambda\nu^2 = 1\}$, a set with
exactly two elements $\pm\nu_0$. If $|E| = 1$ the lcm is linear, not
quadratic; hence $E = \{\nu_0, -\nu_0\}$ and the lcm is
$(T + c\nu_0)(T - c\nu_0)$, with the two distinct roots $\mp c\nu_0$.
This is case (b), and $\lambda = \nu_0^{-2}$; note that
$\nu_0 \notin \{1, -1\}$ automatically, since $\lambda \neq 1$. The
normalization statement is immediate.

(v) If $E = \{-1\}$, the lcm is $(T - c)^2$ by \eqref{eq:mnu}
(for $n = 2$ resonance at $\nu = -1$ would force $\lambda = 1$), and
after rescaling this is the defining relation of $H_n(-1)$; a relation
$(T - a)(T - b) = 0$ with $a \neq b$ has squarefree minimal polynomial
and is therefore impossible here. The final statement is
\eqref{eq:mnu} for $n = 2$, $\lambda\mu^2 = 1$: the single summand acts
by the scalar $-c\mu$, which is annihilated by the factor $T + c\mu$ of
$(T - c)(T + c\mu)$.
\end{proof}

\begin{remark}[the polynomial form of the criterion]\label{rem:absorb}
For $\mu \neq 1$, the semisimplicity hypothesis on $g$ in Theorem
\ref{thm:classification} can be absorbed into the statement. Assume
$\lambda^{-1} \notin \spec(g^n)$, so that $K + L = K =
\Ker(g - I)^{\oplus n}$ and the quotient is identified with
$\overline{V}^{\oplus n}$, $\overline{V} := V/\Ker(g - I)$, on which the
blocks act through the map $\bar g$ induced by $g$ on $\overline{V}$. Writing the local block of
$c^{-1}\rKLM(\sigma_i)$ as
$A_{\bar g} = \bigl(\begin{smallmatrix} 0 & \bar g \\ I & I - \bar g
\end{smallmatrix}\bigr)$, a direct computation gives
\[
(A_{\bar g} - I)(A_{\bar g} + \mu I)
=
\begin{pmatrix}
\bar g - \mu I & -\bar g(\bar g - \mu I) \\
-(\bar g - \mu I) & \bar g(\bar g - \mu I)
\end{pmatrix},
\]
which vanishes if and only if $\bar g = \mu I$, that is, if and only if
\[
(g - I)(g - \mu I) = 0
\]
on $V$. For $\mu \neq 1$ the polynomial $(X - 1)(X - \mu)$ is
squarefree, so this identity forces $g$ to be semisimple with
$\spec(g) \subseteq \{1, \mu\}$; the hypothesis of Theorem
\ref{thm:classification} is thus recovered as a consequence rather than
an assumption. Since a Hecke relation with two distinct parameters
itself forces semisimplicity of the braid generators (Remark
\ref{rem:jordan}), we retain the semisimple formulation in the theorem.
The remaining value $\mu = 1$ is treated in Remark \ref{rem:muone}.
\end{remark}

\begin{remark}[the case $\mu = 1$: non-semisimple input]\label{rem:muone}
The computation of Remark \ref{rem:absorb} is valid at $\mu = 1$ as
well, where it produces a family not covered by Theorem
\ref{thm:classification}. Keeping the hypothesis
$\lambda^{-1} \notin \spec(g^n)$, so that
$K + L = K = \Ker(g - I)^{\oplus n}$ and the quotient is identified with
$\overline{V}^{\oplus n}$, $\overline{V} = V/\Ker(g - I)$, the displayed
matrix vanishes for $\mu = 1$ if and only if $\bar g = I$, that is, if
and only if
\[
(g - I)^2 = 0
\]
on $V$. Since $(X - 1)^2$ is not squarefree, this does \emph{not} force
$g$ to be semisimple.

Concretely, let $g = I + \Nil$ with $\Nil \neq 0$, $\Nil^2 = 0$, and let
the braid part be $s_i = c\,I$. Then $g^n = I + n\Nil$, so
$\lambda g^n - I = (\lambda - 1) I + \lambda n \Nil$ is invertible for
every $\lambda \neq 1$ and hence $L = 0$; moreover $\Nil$ induces $0$ on
$\overline{V}$, so $\bar g = I$. The operators
$c^{-1}\rKLM(\sigma_i)$ therefore act on $\overline{V}^{\oplus n}$ by
the permutation matrices of the transpositions $(i, i+1)$; since
$\Nil \neq 0$ gives $\overline{V} \neq 0$, they have minimal polynomial
\[
(T - 1)(T + 1)
\]
with two distinct roots, and realize $\dim \overline{V}$ copies of the
permutation representation of the symmetric group $\mathfrak{S}_n$: an
$H_n(1) \cong \kk\mathfrak{S}_n$-module of dimension $n(N - k_1)$,
independently of $\lambda \in \kk^{\times}\setminus\{1\}$.

Two consequences deserve emphasis. First, the two-distinct-root regime
is not invisible to the non-semisimple part of the input, so the
semisimplicity hypothesis of Theorem \ref{thm:classification} restricts
the classification rather than merely normalizing it; note that $g = I$
and $g = I + \Nil$ have the same spectrum $\{1\}$ but yield,
respectively, the zero quotient and the module above, so that for
non-semisimple input the Hecke property is not a function of $\spec(g)$
alone. Second, although $K$ removes the eigenspace
$\Ker(g - I)^{\oplus n}$, the quotient of the generalized
$1$-eigenspace survives: since $\Nil(V) \subseteq \Ker \Nil$, the
induced operator on $\overline{V}$ is $\bar g = I$. A classification
for non-semisimple $g$ beyond the case
$(g - I)^2 = 0$ recorded above is not attempted here.
\end{remark}

\begin{remark}[failure at $-1$ is invisible to the eigenvalue
set]\label{rem:minusone}
Suppose $\spec(g) = \{1, -1, \mu\}$ with $\mu \neq \pm 1$ and
$\lambda^{-1} \notin \spec(g^n)$ (for $n \geq 3$ this last hypothesis
may be dropped, by Lemma \ref{lem:rankone}(d)). Then by Corollary \ref{cor:quotient}
and Lemma \ref{lem:rankone}, the eigenvalue \emph{set} of
$\rKLM(\sigma_i)$ is exactly $\{c, -c\mu\}$: the $(-1)$-summand
contributes $-c\cdot(-1) = c$, which coincides with the eigenvalue coming
from the scalar part. Nevertheless, the minimal polynomial is
$(T - c)^2 (T + c\mu)$, and the quadratic relation fails. Two eigenvalues
in the set-theoretic sense therefore do not suffice for the Hecke
property; semisimplicity of the braid generators must be required in
addition. More precisely, if $\spec(g)\setminus\{1\} = \{-1\}$, the
minimal polynomial is $(T - c)^2$ alone: after rescaling this is
exactly the defining relation of the repeated-root specialization
$H_n(-1)$, so the representation does factor through a Hecke algebra,
but a non-semisimple one, with a single eigenvalue; it lies outside the
two-distinct-parameter regime classified by Theorem
\ref{thm:classification}. If $-1$ occurs in $\spec(g)$ together with
some $\mu \neq \pm 1$, the
minimal polynomial is $(T - c)^2(T + c\mu)$, of degree three. In terms
of Remark \ref{rem:jordan}, the eigenvalue $-1$ is exactly an overlap
point of the two spectra in \eqref{eq:multiset}:
$-c\cdot(-1) = c \in \spec(s_i)$.
\end{remark}

\begin{remark}[$\lambda$-independence and resonance]\label{rem:lambda}
Within the family of part (ii), the quadratic relation is decided by
$g$ alone, uniformly in $\lambda$; the parameter controls only the
dimension, which drops by $\ell = m_{\mu}$ exactly at the single
resonant value $\lambda = \mu^{-n}$ (which lies in
$\kk^{\times}\setminus\{1\}$ exactly when $\mu^{n} \neq 1$). For
$n = 2$ this resonance in addition degenerates the eigenvalue set
(Theorem \ref{thm:classification}(v)), and outside the family it can
create the two-distinct-root property (Theorem
\ref{thm:classification}(iv)); for $n \geq 3$ neither phenomenon
occurs. This resonance set is the specialization to the present family
of the exceptional set
$\Sigma = \{\lambda\, ;\, \det(\lambda g_1 \cdots g_n - I_N) = 0\}$
(denoted $E$ in \cite{Negami25}) that
governs the degeneration of the invariant Hermitian form in
\cite{Negami25}. In particular, when $\kk = \mathbb{C}$ and
$|\lambda| = 1$, and the input preserves a nondegenerate invariant
Hermitian form (not necessarily definite), the results of
\cite[Section 5]{Negami25} apply and equip these Hecke algebra
representations with an invariant nondegenerate Hermitian form, which
may be indefinite, and whose signature is computable by the algorithm
given there. In the rank-one Burau case, the classical unitarity with
respect to a definite form is due to Squier \cite{Squier84}.
\end{remark}

\begin{remark}[the excluded value $\lambda = 1$]\label{rem:lambdaone}
At $\lambda = 1$ the convolution degenerates. By \eqref{eq:Ediff} the
functions $E_1(v), \dots, E_n(v)$ coincide, so the $n$ defining conditions of $L$ are no
longer equivalent to the chain conditions \eqref{eq:Lchar} and collapse
to the single equation $\sum_{k=1}^{n} (g_k - I)\, v_k = 0$. Thus
$L = \Ker \Phi$ for the map
\[
\Phi \colon V^{\oplus n} \longrightarrow
\textstyle\sum_{j} \operatorname{Im}(g_j - I),
\qquad
\Phi(v_1, \dots, v_n) = \sum_{j=1}^{n} (g_j - I)\, v_j ,
\]
and $K \subseteq L$, whence
\[
V^{\oplus n}/(K + L) \;=\; V^{\oplus n}/L \;\cong\;
\textstyle\sum_{j} \operatorname{Im}(g_j - I).
\]
For scalar braid part this reads
$V^{\oplus n}/(K + L) \cong \operatorname{Im}(g - I) \cong V/\Ker(g - I)$.
Under this identification the actions correspond: for $g_j = g$,
$s_i = cI$ one checks directly that
\[
\Phi \circ \rho^{\mathrm{LM}}_{1}(x_i) = g \circ \Phi,
\qquad
\Phi \circ \rho^{\mathrm{LM}}_{1}(\sigma_i) = c\, \Phi,
\]
so the quotient is equivariantly identified with the input
representation restricted to the invariant subspace
$\operatorname{Im}(g - I)$.
Hence $\rho^{\mathrm{KLM}}_{1}$ recovers the input representation
exactly when $g - I$ is invertible --- in particular in the rank-one
Burau case $g = t \neq 1$ of Example \ref{ex:burau}, in accordance with
the identity $mc_1 = \mathrm{id}$ in Katz's theory
\cite{Katz96, DettweilerReiter00} --- whereas for $g = I$ the output is
zero. The exclusion of $\lambda = 1$ therefore removes the trivial
convolution only up to the contribution of the eigenvalue $1$ of $g$,
which at $\lambda = 1$ is not absorbed by $K$ but already contained in
$L$.
\end{remark}

\begin{example}[Burau]\label{ex:burau}
Let $N = 1$, $g = t \in \kk^{\times}\setminus\{1\}$, $c = 1$. Then
$\rLM(\sigma_i)$ is the $n \times n$ matrix which is the identity except
for the block
$\bigl(\begin{smallmatrix} 0 & t \\ 1 & 1 - t\end{smallmatrix}\bigr)$
in rows and columns $i, i+1$: a variant of the unreduced Burau
representation, with eigenvalues $\{1^{(n-1)}, -t\}$ and quadratic
relation $(T - 1)(T + t) = 0$. For generic $\lambda$ (i.e.,
$\lambda \neq t^{-n}$) we have $K = L = 0$ and
$\rKLM(\sigma_i) = \rLM(\sigma_i)$ is $n$-dimensional. At the resonant
value $\lambda = t^{-n}$ --- available within our setting whenever
$t^{n} \neq 1$, since we require $\lambda \neq 1$ --- the subspace $L$
is spanned by
$(t^{n-1}, \dots, t, 1)^{T}$, on which every $\sigma_i$ acts trivially
(Lemma \ref{lem:rankone}(b)); the quotient is the $(n-1)$-dimensional
reduced Burau representation. Theorem \ref{thm:classification}(ii) with
$\mu = t$ exhibits both as $H_n(t)$-modules, recovering the classical
picture \cite{Jones87}. We note that an explicit reduced form
of the Long--Moody construction, valid for arbitrary input, is given in
\cite[Section 6]{BigelowTian08}; the mechanism here is different, the
reduction arising from the quotient by the resonant subspace $L$ at the
special parameter $\lambda = t^{-n}$ rather than from a modified
construction.
\end{example}

\begin{example}[higher-rank Hecke modules]\label{ex:higher}
Let $g$ be semisimple with $\spec(g) = \{1, \mu\}$, $k_1 = \dim \Ker(g - I)$ and
$m_{\mu} = N - k_1 \geq 1$. For generic $\lambda$, Theorem
\ref{thm:classification} produces an $n\,m_{\mu}$-dimensional
representation of $H_n(\mu)$. By Lemma \ref{lem:decomp} it is isomorphic
to $m_{\mu}$ copies of the ($\lambda$-twisted, unreduced) Burau module of
Example \ref{ex:burau}; the interest of the higher-rank picture lies in
the role of the eigenvalue $1$: each direction of $\Ker(g - I)$ contributes the
eigenvalue pair $\{c, -c\}$ to the intermediate representation
$\rLMl(\sigma_i)$, and the spectrum of $\rLMl(\sigma_i)$ restricted to
$K = \Ker(g - I)^{\oplus n}$ is exactly $\{c\}^{\uplus (n-1)k_1} \uplus
\{-c\}^{\uplus k_1}$, so that the quotient removes precisely these pairs.
This is the mechanism referred to after Theorem \ref{thm:B} in the
introduction.
\end{example}

\begin{remark}[Temperley--Lieb quotient]\label{rem:TL}
The Hecke algebra representations of Theorem
\ref{thm:classification}(ii) in fact factor through the Temperley--Lieb
algebra. Recall that for $\delta \in \kk$ the Temperley--Lieb algebra
$TL_n(\delta)$ is the $\kk$-algebra with generators
$e_1, \dots, e_{n-1}$ and relations
\[
e_i^2 = \delta\, e_i, \qquad
e_i e_{i\pm 1} e_i = e_i, \qquad
e_i e_j = e_j e_i \quad (|i - j| \geq 2).
\]
Given $q \in \kk^{\times}$, choose $r \in \kk^{\times}$ with $r^2 = q$
(possible since $\kk$ is algebraically closed in this section; over a
general base field one first passes to $\kk(r)$) and put
$\delta = r + r^{-1}$. For $q \neq -1$ and $n \geq 3$,
\[
TL_n(\delta) \;\cong\; H_n(q) \big/ \langle A_3 \rangle,
\qquad
A_3 := 1 - T_1 - T_2 + T_1 T_2 + T_2 T_1 - T_1 T_2 T_1,
\]
where $A_3$ is the three-strand $q$-antisymmetrizer in the
normalization \eqref{eq:heckerel} \cite{TemperleyLieb71, Jones87}; in the convention $(T_i + 1)(T_i - q) = 0$ of \cite{Jones87}
the corresponding element has all signs positive. ($A_3$ is not to be
confused with the Jones--Wenzl projector $f_3 \in TL_3$ \cite{Wenzl87},
which is nonzero for generic $\delta$.) For $n = 2$ there is no
three-strand relation and $H_2(q) \cong TL_2(\delta)$, so the statement
below is of interest for $n \geq 3$.

In the setting of Theorem \ref{thm:classification}(ii), write
$T_i := c^{-1}\rKLM(\sigma_i)$, so that $(T_i - 1)(T_i + \mu) = 0$. We
claim that
\begin{equation}\label{eq:TLrel}
(1 - T_i)^2 = (1 + \mu)(1 - T_i), \qquad
(1 - T_i)(1 - T_{i\pm 1})(1 - T_i) = \mu\,(1 - T_i),
\end{equation}
while $(1 - T_i)(1 - T_j) = (1 - T_j)(1 - T_i)$ for $|i - j| \geq 2$ is
automatic from the braid relations. In fact, for the present family
the far products vanish outright,
\begin{equation}\label{eq:farzero}
(1 - T_i)(1 - T_j) = 0 \qquad (|i - j| \geq 2),
\end{equation}
a property of the scalar (Burau-type) output which is strictly stronger
than the far commutation required in $TL_n$ and is not part of the
Temperley--Lieb relations. Granting \eqref{eq:TLrel}, with $r^2 = \mu$ and
$\delta = r + r^{-1}$ as above, the operators
$e_i := r^{-1}(1 - T_i)$ satisfy the Temperley--Lieb relations, so that
the $H_n(\mu)$-module of Theorem \ref{thm:classification}(ii) factors
through $TL_n(r + r^{-1})$; over a base field not containing $r$, the
module satisfies \eqref{eq:TLrel} as it stands and becomes a
$TL_n(r + r^{-1})$-module after the base change to $\kk(r)$.

The identities \eqref{eq:TLrel} follow from a rank-one phenomenon. The
first is the quadratic relation of Theorem \ref{thm:classification}(ii).
For the others, by Lemma \ref{lem:decomp} it suffices to treat a single
copy of the rank-one building block of Lemma \ref{lem:rankone}(c) with
$\nu = \mu$, where $T_i$ is the identity matrix except for the block
$\bigl(\begin{smallmatrix} 0 & \mu \\ 1 & 1 - \mu \end{smallmatrix}
\bigr)$ in rows and columns $i, i+1$. There
\[
1 - T_i = u_i\, w_i^{T}, \qquad
u_i := \varepsilon_i - \varepsilon_{i+1}, \qquad
w_i := \varepsilon_i - \mu\, \varepsilon_{i+1},
\]
where $\varepsilon_1, \dots, \varepsilon_n$ denotes the standard basis
of $\kk^n$. Consequently
$(1 - T_i)(1 - T_j) = (w_i^{T} u_j)\, u_i w_j^{T}$, and the scalars
\[
w_i^{T} u_i = 1 + \mu, \qquad
w_i^{T} u_{i+1} = -\mu, \qquad
w_{i+1}^{T} u_i = -1, \qquad
w_i^{T} u_j = 0 \quad (|i - j| \geq 2)
\]
yield \eqref{eq:TLrel} at once. Being polynomial identities in the
matrix entries, the relations \eqref{eq:TLrel} are inherited by the
quotient at the resonant parameter $\lambda = \mu^{-n}$ (the reduced
Burau case of Example \ref{ex:burau}) and remain valid when $\mu$ is a
root of unity; the semisimplicity of $TL_n(\delta)$ then depends on $n$
and on the order of the parameter. Under the hypotheses of the
unitarity results of \cite{Negami25} ($\kk = \mathbb{C}$,
$|\lambda| = 1$, unitary input), the resulting Temperley--Lieb
representation preserves a nondegenerate, possibly indefinite,
Hermitian form; its definiteness requires a separate analysis. Although
the value $\mu = -1$ lies outside the two-distinct-root regime of
Theorem \ref{thm:classification}, the identities \eqref{eq:TLrel}
themselves persist there: with $r^2 = -1$ one has $\delta = 0$, and the
same formulas yield a possibly nonsemisimple $TL_n(0)$-module. In terms of Young diagrams, the statement reflects the
fact that, generically, the module of Theorem
\ref{thm:classification}(ii) is a multiple of
$S^{(n)} \oplus S^{(n-1,1)}$ and involves only diagrams with at most
two rows.
\end{remark}

\section{The general case: the action on
\texorpdfstring{$K + L$}{K+L}}\label{sec:general}

We return to an arbitrary representation
$\rho \colon F_n \rtimes B_n \to \GL(V)$ and describe the KLM quotient
completely at the level of characteristic polynomials. Throughout this
section $\lambda \in \kk^{\times} \setminus \{1\}$; spectra are taken
over an algebraic closure of $\kk$, and in Proposition
\ref{prop:projection} we assume in addition that the minimal
polynomials of $s_i$ and $s_i g_i$ split over $\kk$ (e.g., that $\kk$
is algebraically closed, as in Section \ref{sec:hecke}). We write
$\chi_A(z) := \det(zI - A)$,
\[
W_j := \Ker(g_j - I) \quad (1 \le j \le n),
\qquad
E_{\lambda} := \Ker(\lambda\, g_1 \cdots g_n - I_N),
\qquad
S_i := \rLMl(\sigma_i).
\]

Two preliminary observations. First, the standard generators $\sigma_i$
are mutually conjugate in $B_n$ for $n \geq 3$; hence the operators
$\rKLM(\sigma_i)$ are mutually conjugate, and their spectra, minimal
polynomials and semisimplicity do not depend on $i$. It therefore
suffices to analyze a single $i$. Second, semisimplicity is not an
additional hypothesis but part of the Hecke condition: a relation
$(T - a)(T - b) = 0$ with $a \neq b$ has squarefree minimal polynomial
and forces the braid generators to act semisimply, while a nontrivial
Jordan block produces a repeated factor and rules out any quadratic
relation with two distinct roots.

\begin{lemma}\label{lem:KL}
For $\lambda \neq 1$ we have $K \cap L = 0$. Since $K$ and $L$ are each
$S_i$-invariant (Lemmas \ref{lem:Laction} and \ref{lem:Kaction} below),
$K + L = K \oplus L$ is an $S_i$-invariant decomposition and
$\chi_{S_i|_{K+L}} = \chi_{S_i|_K}\, \chi_{S_i|_L}$.
\end{lemma}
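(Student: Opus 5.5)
The core claim is $K \cap L = 0$ for $\lambda \neq 1$. The remaining assertions are then formal: given the $S_i$-invariance of $K$ and $L$ from Lemmas \ref{lem:Laction} and \ref{lem:Kaction}, a direct sum of invariant subspaces is invariant. Choosing a basis adapted to $K \oplus L$ makes $S_i|_{K+L}$ block diagonal, so its characteristic polynomial is the product $\chi_{S_i|_K}\,\chi_{S_i|_L}$.

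To prove $K \cap L = 0$, take $v = (v_1,\dots,v_n)^T \in K \cap L$. Membership in $K$ gives $g_j v_j = v_j$ for every $j$. Since $\lambda \neq 1$, the chain characterization \eqref{eq:Lchar} of $L$ applies. It gives $v_k = g_{k+1}v_{k+1}$ for $1 \le k \le n-1$, and combining this with $g_{k+1}v_{k+1} = v_{k+1}$ yields $v_1 = v_2 = \dots = v_n =: w$.

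The last condition in \eqref{eq:Lchar} is $v_n = \lambda\, g_1 \cdots g_n v_n$. Since $w = v_n \in W_n$, we have $g_n w = w$. Also $w = v_{n-1} \in W_{n-1}$, so $g_{n-1} w = w$, and continuing down the chain gives $g_1 \cdots g_n w = w$. The last condition therefore becomes $w = \lambda w$, so $(1-\lambda)w = 0$, and hence $w = 0$ because $\lambda \neq 1$. Thus $v = 0$.

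As a cross-check that avoids \eqref{eq:Lchar}, one can evaluate $E_1(v)$ directly from Definition \ref{def:LM}. Its first block is $\lambda g_1 v_1 + \sum_{j\ge 2}(g_j - 1)v_j - v_1$, which for $v \in K$ reduces to $(\lambda - 1)v_1$. Hence $E_1(v) = 0$ forces $v_1 = 0$, and the analogous computation with $E_j$ gives $v_j = 0$ for all $j$. The argument is routine; the only point needing care is applying \eqref{eq:Lchar} only under $\lambda \neq 1$. The invariance of $K$ and $L$ is taken from the cited lemmas and is not reproved here.
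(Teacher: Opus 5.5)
Your proof is correct and your main argument is essentially the paper's: by \eqref{eq:Lchar} and $v_{k+1} \in W_{k+1}$, all components of $v$ equal a common $w \in \bigcap_j W_j$, and the last chain condition then reduces to $w = \lambda w$. Your cross-check is also valid and slightly more direct. On $K$ one has $E_j(v) = (\lambda - 1)v_j$ for every $j$, so $K \cap L = 0$ follows without the chain characterization.
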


\begin{proof}
If $v \in K \cap L$, the characterization \eqref{eq:Lchar} gives
$v_k = g_{k+1} v_{k+1}$ with $v_{k+1} \in W_{k+1}$, so
$v_k = v_{k+1}$ for all $k$ and all components equal a common vector
$w \in \bigcap_j W_j$. The last condition in \eqref{eq:Lchar} then reads
$w = \lambda (g_1 \cdots g_n) w = \lambda w$, whence $w = 0$
(cf.\ the proof of \cite[Theorem 16]{Negami25}).
\end{proof}

\begin{lemma}[the action on $L$]\label{lem:Laction}
The Artin action \eqref{eq:artin} fixes the product of the generators,
$\theta_{\sigma_i}(x_1 \cdots x_n) = x_1 \cdots x_n$; consequently
$s_i$ commutes with $g_1 \cdots g_n$ and preserves $E_{\lambda}$.
The map $v \mapsto v_n$ is an isomorphism $L \to E_{\lambda}$
intertwining $S_i|_L$ with $s_i|_{E_{\lambda}}$. In particular
\[
\chi_{S_i|_L}(z) = \chi_{s_i|_{E_{\lambda}}}(z),
\]
so the subspace $L$ can only remove eigenvalues belonging to
$\spec(s_i)$.
\end{lemma}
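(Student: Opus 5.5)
First, the fixed-product claim is checked directly from \eqref{eq:artin}. The generator $\theta_{\sigma_i}$ fixes $x_j$ for $j \neq i, i+1$, and
\[
\theta_{\sigma_i}(x_i x_{i+1}) = x_{i+1}\, x_{i+1}^{-1} x_i x_{i+1} = x_i x_{i+1},
\]
so $\theta_{\sigma_i}(x_1 \cdots x_n) = x_1 \cdots x_n$. Applying $\rho$ to $\sigma_i (x_1\cdots x_n)\sigma_i^{-1} = x_1 \cdots x_n$ gives $s_i (g_1\cdots g_n) s_i^{-1} = g_1 \cdots g_n$. Hence $s_i$ commutes with $\lambda g_1 \cdots g_n - I$ and preserves its kernel $E_\lambda$.

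Next, I would use the chain description \eqref{eq:Lchar}, valid since $\lambda \neq 1$. A vector $v \in L$ is determined by $v_n$ through the recursion
\[
v_k = g_{k+1} g_{k+2} \cdots g_n\, v_n ,
\]
and the only constraint on $v_n$ is $v_n \in E_\lambda$. Conversely, any $w \in E_\lambda$ defines such a $v$ with $v_n = w$. So $\pi \colon v \mapsto v_n$ is a linear bijection $L \to E_\lambda$.

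The main step is the intertwining identity $(S_i v)_n = s_i v_n$ for $v \in L$, together with $S_i L \subseteq L$. Invariance of $L$ can be obtained either by a direct check or from the fact that $L$ is the invariant vectors of the free group part under a representation of the semidirect product. In the direct route, since $\sigma_i F_n \sigma_i^{-1} = F_n$, the operator $S_i$ permutes the operators $\rLMl(x_j)$ up to conjugation, hence maps $\bigcap_j \Ker(\rLMl(x_j) - I)$ to itself. The $n$-th block of $S_i v$ is computed in two cases.
\begin{itemize}
\item If $i+1 < n$, the $n$-th block of $S_i$ is simply $s_i$, so $(S_i v)_n = s_i v_n$ immediately.
\item If $i = n-1$, the $n$-th block row of $S_i$ restricted to columns $n-1, n$ is $(s_{n-1},\, s_{n-1}(I - g_n))$. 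This gives $(S_i v)_n = s_{n-1} v_{n-1} + s_{n-1} v_n - s_{n-1} g_n v_n = s_{n-1} v_n$, using $v_{n-1} = g_n v_n$.
\end{itemize}
This mirrors the rank-one computation in Lemma \ref{lem:rankone}(b). Thus $\pi \circ S_i|_L = s_i|_{E_\lambda} \circ \pi$.

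Since $\pi$ is an isomorphism conjugating $S_i|_L$ to $s_i|_{E_\lambda}$, the two characteristic polynomials agree. Because $E_\lambda$ is an $s_i$-invariant subspace of $V$, its characteristic polynomial divides $\chi_{s_i}$. Hence the eigenvalues removed by $L$ lie in $\spec(s_i)$. I expect the only delicate point to be the invariance of $L$ under $S_i$; if the structural argument feels too brisk, I would instead verify $S_i v \in L$ directly, by checking the chain relations for $S_i v$ at indices $k = i-1, i, i+1$ using \eqref{eq:key}.
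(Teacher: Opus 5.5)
Your proof is correct and follows the paper's argument step for step: the fixed-product check, the bijection $L \to E_\lambda$ read off from \eqref{eq:Lchar}, and the two-case computation of $(S_i v)_n$ using $v_{n-1} = g_n v_n$. The only addition is your justification of $S_i L \subseteq L$, which the paper simply invokes; it is sound once stated precisely. $S_i$ does not literally permute the operators $\rLMl(x_j)$, but it normalizes the group $\rLMl(F_n)$, and $L$ is fixed pointwise by all of $\rLMl(F_n)$.
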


\begin{proof}
Only the factors $x_i x_{i+1}$ change under $\theta_{\sigma_i}$, and
$\theta_{\sigma_i}(x_i x_{i+1}) = x_{i+1} \cdot x_{i+1}^{-1} x_i x_{i+1}
= x_i x_{i+1}$; applying $\rho$ gives
$s_i (g_1 \cdots g_n) s_i^{-1} = g_1 \cdots g_n$.
By \eqref{eq:Lchar}, an element $v \in L$ is reconstructed from its last
component by $v_k = g_{k+1} g_{k+2} \cdots g_n\, v_n$, and the remaining
condition $v_n = \lambda(g_1 \cdots g_n) v_n$ says exactly
$v_n \in E_{\lambda}$; thus $v \mapsto v_n$ is a bijection
$L \to E_{\lambda}$. For the equivariance, $S_i v \in L$ by invariance,
and its last component is $s_i v_n$: for $i \leq n-2$ the $n$-th block
row of $S_i$ is $s_i$ on the diagonal, while for $i = n-1$,
\[
(S_{n-1} v)_n = s_{n-1} v_{n-1} + s_{n-1}(I - g_n) v_n
= s_{n-1} g_n v_n + s_{n-1}(I - g_n) v_n = s_{n-1} v_n. \qedhere
\]
\end{proof}

\begin{lemma}[the action on $K$]\label{lem:Kaction}
The operator $s_i$ preserves $W_k$ for $k \neq i, i+1$ and maps $W_i$
isomorphically onto $W_{i+1}$, while $s_i g_i$ maps $W_{i+1}$ into
$W_i$. With respect to $K = \bigoplus_j W_j$,
\[
S_i|_K \;=\; \bigoplus_{k \neq i, i+1} s_i|_{W_k}
\;\oplus\; P_i,
\qquad
P_i(w_i, w_{i+1}) := \bigl(s_i g_i\, w_{i+1},\; s_i\, w_i\bigr)
\ \ \text{on } W_i \oplus W_{i+1},
\]
and the characteristic polynomial of the twisted swap $P_i$ --- the
\emph{pair factor} --- is
\[
\chi_{P_i}(z) = \det\!\bigl(z^2 I_{W_i} - (s_i g_i s_i)|_{W_i}\bigr).
\]
\end{lemma}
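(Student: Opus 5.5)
The proof splits into three steps: first the maps $s_i$ and $s_i g_i$ on the spaces $W_j$, then the block formula for $S_i|_K$, and finally the determinant of the swap $P_i$.

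\emph{Step 1: how $s_i$ and $s_i g_i$ move the spaces $W_j$.} Apply $\rho$ to the Artin relations \eqref{eq:artin}:
\[
s_i g_j s_i^{-1} = g_j \ (j \neq i, i+1), \qquad
s_i g_i s_i^{-1} = g_{i+1}, \qquad
s_i g_{i+1} s_i^{-1} = g_{i+1}^{-1} g_i g_{i+1}.
\]
\begin{itemize}
\item For $k \neq i, i+1$, $s_i$ commutes with $g_k$, so it preserves $W_k$.
\item By \eqref{eq:key}, $g_{i+1} s_i w = s_i g_i w = s_i w$ for $w \in W_i$. Hence $s_i$ maps $W_i$ into $W_{i+1}$, injectively because $s_i$ is invertible.
\item Conjugation by $s_i$ identifies $g_i - I$ with $g_{i+1} - I$, so $\dim W_i = \dim W_{i+1}$. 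Therefore $s_i \colon W_i \to W_{i+1}$ is an isomorphism.
\item For $w \in W_{i+1}$, the third relation gives $s_i w \in \Ker(g_{i+1}^{-1} g_i g_{i+1} - I) = g_{i+1}^{-1} W_i$. Hence $s_i g_i w = g_{i+1} s_i w \in W_i$. So $s_i g_i$ maps $W_{i+1}$ into $W_i$, and it is an isomorphism by dimension count.
\end{itemize}

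\emph{Step 2: the block formula for $S_i|_K$.} Apply $S_i$ to $v = (w_1, \dots, w_n) \in K$ using Definition \ref{def:LM}.
\begin{itemize}
\item Blocks $k \neq i, i+1$ give $s_i w_k$.
\item Block $i$ gives $s_i g_i w_{i+1}$.
\item Block $i+1$ gives $s_i w_i + s_i(I - g_{i+1}) w_{i+1} = s_i w_i$, since $w_{i+1} \in W_{i+1}$.
\end{itemize}
By Step 1 every component lies in the correct $W_j$. This proves that $K$ is $S_i$-invariant and gives the stated decomposition, with $P_i$ the twisted swap.

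\emph{Step 3: the pair factor.} Write $P_i = \bigl(\begin{smallmatrix} O & \beta \\ \alpha & O \end{smallmatrix}\bigr)$ on $W_i \oplus W_{i+1}$, where $\alpha = s_i|_{W_i} \colon W_i \to W_{i+1}$ and $\beta = s_i g_i|_{W_{i+1}} \colon W_{i+1} \to W_i$. The standard Schur-complement identity for a block anti-diagonal operator gives
\[
\det(zI - P_i) = \det(z^2 I_{W_i} - \beta\alpha).
\]
One can derive it by multiplying the block matrix on the right by $\bigl(\begin{smallmatrix} I & 0 \\ z^{-1}\alpha & I\end{smallmatrix}\bigr)$ for $z \neq 0$, or by polynomial identity. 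The composite is $\beta\alpha = s_i g_i s_i|_{W_i}$, which is a well-defined endomorphism of $W_i$ by Step 1.

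The only delicate point is the inclusion $s_i g_i W_{i+1} \subseteq W_i$ in Step 1, since it uses the third Artin relation rather than just \eqref{eq:key}. The other steps are direct computation.
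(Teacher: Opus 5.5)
Your proof is correct and follows the paper's argument: you use the Artin relations to move the $W_j$, read the block form off Definition~\ref{def:LM}, and apply the Schur complement $\det(z^2 I_{W_i} - \beta\alpha)$ to the twisted swap. The differences are cosmetic: the paper packages the third relation as the identity $g_i s_i g_i = s_i g_i g_{i+1}$ rather than going through $s_i W_{i+1} \subseteq g_{i+1}^{-1} W_i$, and it cites the invariance of $K$ from earlier work where you verify it directly.
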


\begin{proof}
For $k \neq i, i+1$ we have $s_i g_k s_i^{-1} = g_k$, so
$s_i W_k = W_k$; the identity \eqref{eq:key} gives
$s_i W_i = \Ker(s_i g_i s_i^{-1} - I) = W_{i+1}$. Combining
\eqref{eq:key} with $s_i g_{i+1} s_i^{-1} = g_{i+1}^{-1} g_i g_{i+1}$
yields $g_i s_i g_i = s_i g_i g_{i+1}$, so that for
$w \in W_{i+1}$, $g_i(s_i g_i w) = s_i g_i g_{i+1} w = s_i g_i w$,
i.e., $s_i g_i\, W_{i+1} \subseteq W_i$. The displayed block form of
$S_i|_K$ is then read off from Definition \ref{def:LM} exactly as in
the proof of invariance of $K$ \cite{HiroeNegami23, Negami25}. For the
characteristic polynomial, write $P_i = \bigl(\begin{smallmatrix}
0 & A \\ B & 0 \end{smallmatrix}\bigr)$ with
$A = (s_i g_i)|_{W_{i+1}} \colon W_{i+1} \to W_i$ and
$B = s_i|_{W_i} \colon W_i \to W_{i+1}$; a Schur complement computation,
valid as a polynomial identity, gives
$\chi_{P_i}(z) = \det(z^2 I_{W_i} - AB)$ with
$AB = (s_i g_i s_i)|_{W_i}$. Note that
$P_i^2 = (s_i g_i s_i)|_{W_i} \oplus (s_i^2 g_i)|_{W_{i+1}}$, the two
summands being conjugate via $s_i$.
\end{proof}

Assembling Theorem \ref{thm:charpoly} with Lemmas
\ref{lem:KL}--\ref{lem:Kaction} gives a closed formula for the braid
generators on the KLM quotient.

\begin{theorem}\label{thm:generalcharpoly}
Let $\lambda \in \kk^{\times} \setminus \{1\}$ and $1 \le i \le n-1$.
Then
\[
\chi_{\rKLM(\sigma_i)}(z)
=
\frac{\chi_{s_i}(z)^{\,n-1}\,\det(zI + s_i g_i)}
{\displaystyle
\prod_{k \neq i, i+1} \chi_{s_i|_{W_k}}(z)\;\cdot\;
\det\!\bigl(z^2 I_{W_i} - (s_i g_i s_i)|_{W_i}\bigr)\;\cdot\;
\chi_{s_i|_{E_{\lambda}}}(z)},
\]
the denominator dividing the numerator. Moreover, the operators
$\rKLM(\sigma_i)$ satisfy the Hecke relation
$(T - a)(T - b) = 0$ if and only if
\[
(S_i - aI)(S_i - bI)\, V^{\oplus n} \subseteq K + L
\]
(Remark \ref{rem:exact}); when $a \neq b$, this containment forces
semisimplicity and eigenvalue set contained in $\{a, b\}$, i.e., that
the support of the displayed quotient of characteristic polynomials
lies in $\{a, b\}$.
\end{theorem}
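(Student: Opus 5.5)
The plan is to assemble the formula from the invariant-subspace multiplicativity of characteristic polynomials, feeding in the pieces already computed. Since $K+L$ is $S_i$-invariant, recalled just before Corollary \ref{cor:quotient}, we have $\chi_{S_i} = \chi_{S_i|_{K+L}}\cdot \chi_{\rKLM(\sigma_i)}$ as monic polynomials. This identity already shows that the denominator divides the numerator, and that $\chi_{\rKLM(\sigma_i)}$ is their exact quotient. It then suffices to identify the two factors we divide by.

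The numerator is Theorem \ref{thm:charpoly}: $\chi_{S_i}=\chi_{s_i}^{n-1}\det(zI+s_ig_i)$. For the denominator, Lemma \ref{lem:KL} (which uses $\lambda\neq1$) gives $K+L=K\oplus L$ with both summands invariant. Hence $\chi_{S_i|_{K+L}}=\chi_{S_i|_K}\,\chi_{S_i|_L}$. Lemma \ref{lem:Laction} identifies $\chi_{S_i|_L}=\chi_{s_i|_{E_\lambda}}$. Lemma \ref{lem:Kaction} decomposes $S_i|_K$ as $\bigoplus_{k\neq i,i+1}s_i|_{W_k}\oplus P_i$, so $\chi_{S_i|_K}=\prod_{k\ne i,i+1}\chi_{s_i|_{W_k}}\cdot\det(z^2I_{W_i}-(s_ig_is_i)|_{W_i})$. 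Substituting these gives the displayed formula. Because $\kk[z]$ is a domain, the division is legitimate.

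For the Hecke criterion, apply Remark \ref{rem:exact} with $p(T)=(T-a)(T-b)$. The induced operator $\bar S$ on $V^{\oplus n}/(K+L)$ satisfies $p(\bar S)=\overline{p(S_i)}$. This vanishes exactly when $p(S_i)V^{\oplus n}\subseteq K+L$, and that equivalence is all that is needed. When $a\neq b$, $p$ is squarefree, so the minimal polynomial of $\bar S$ divides a squarefree polynomial. Hence $\bar S$ is semisimple, with eigenvalues among the roots of $p$, namely $\{a,b\}$. Since $\chi_{\bar S}$ is the displayed quotient, its roots lie in $\{a,b\}$.

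I expect no genuine obstacle, since all substantive computations are in the cited lemmas. The only point needing care is to check that the block decomposition in Lemma \ref{lem:Kaction} is indeed a direct sum of invariant pieces, so that characteristic polynomials multiply. This holds because $K=\bigoplus_j W_j$ and each listed summand ($W_k$ for $k\ne i,i+1$, and $W_i\oplus W_{i+1}$) is mapped into itself. It is also worth recording that $\lambda\ne1$ is used only through Lemma \ref{lem:KL}, which is what allows $\chi_{S_i|_{K+L}}$ to be split as the product of $\chi_{S_i|_K}$ and $\chi_{S_i|_L}$.
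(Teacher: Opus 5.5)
Your proposal is correct and follows the paper's proof essentially step for step: multiplicativity of characteristic polynomials along the invariant subspace $K+L$, Theorem \ref{thm:charpoly} for the numerator, Lemma \ref{lem:KL} to split $K+L=K\oplus L$, Lemmas \ref{lem:Laction} and \ref{lem:Kaction} for the denominator factors, and Remark \ref{rem:exact} for the Hecke criterion. One small correction to your closing aside: $\lambda\neq 1$ is used not only in Lemma \ref{lem:KL} but also in Lemma \ref{lem:Laction}, because the identification $L\cong E_\lambda$ via $v\mapsto v_n$ rests on the chain characterization \eqref{eq:Lchar}, which fails at $\lambda=1$ (there $L\supseteq K$).
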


\begin{proof}
Since $K + L$ is invariant, characteristic polynomials multiply along
$0 \subseteq K + L \subseteq V^{\oplus n}$; by Lemma \ref{lem:KL} the
restriction to $K + L$ contributes
$\chi_{S_i|_K}\chi_{S_i|_L}$, which is computed by Lemmas
\ref{lem:Laction} and \ref{lem:Kaction}, while the total
characteristic polynomial is given by Theorem \ref{thm:charpoly}. The
second statement is Remark \ref{rem:exact}.
\end{proof}

\begin{remark}\label{rem:scalarcheck}
Specializing to the scalar input of Section \ref{sec:hecke}
($s_i = cI$, $g_j = g$ semisimple) recovers Theorem
\ref{thm:classification}: all $W_k$ equal $\Ker(g-I)$ of dimension
$k_1$, the pair factor becomes
$(z^2 - c^2)^{k_1} = (z-c)^{k_1}(z+c)^{k_1}$, cancelling the
contribution $(z + c)^{k_1}$ of the eigenvalue $1$ of $g$ to
$\det(zI + cg)$, and $\chi_{s_i|_{E_\lambda}}(z) = (z - c)^{\ell}$; the
quotient is
$(z-c)^{(n-1)(N-k_1)-\ell}\prod_{\nu \neq 1}(z + c\nu)^{m_\nu}$.
\end{remark}

\begin{remark}[Temperley--Lieb criterion for general input]
\label{rem:TLgeneral}
For arbitrary input, let $a, b \in \kk^{\times}$, put $\mu := -b/a$ and
$X_i := a^{-1} S_i$, and suppose the quadratic containment
$(X_i - 1)(X_i + \mu)\, V^{\oplus n} \subseteq K + L$ of Theorem
\ref{thm:generalcharpoly} holds, so that the normalized quotient
operators satisfy the Hecke relation. Then they satisfy the
Temperley--Lieb relations \eqref{eq:TLrel} if and only if, in addition,
\[
\Bigl[
\bigl(1 - X_i\bigr)\bigl(1 - X_{i+1}\bigr)\bigl(1 - X_i\bigr)
- \mu\,\bigl(1 - X_i\bigr)
\Bigr] V^{\oplus n} \;\subseteq\; K + L
\qquad (1 \le i \le n-2),
\]
together with the mirror containment with $i$ and $i+1$ interchanged.
The far commutation $(1 - X_i)(1 - X_j) = (1 - X_j)(1 - X_i)$ for
$|i - j| \geq 2$ is automatic, the operators being polynomials in the
commuting $\rKLM(\sigma_i)$, $\rKLM(\sigma_j)$; the stronger vanishing
\eqref{eq:farzero} is particular to the scalar family and is not
required. Unlike the single-generator analysis of Theorem
\ref{thm:generalcharpoly}, this condition involves two adjacent braid
generators simultaneously.
\end{remark}

\begin{corollary}[necessary spectral conditions]\label{cor:necessary}
Suppose the eigenvalue set of $\rKLM(\sigma_i)$ is contained in
$\{a, b\}$. Then every eigenvalue $\nu \notin \{a,b\}$ of $S_i$ must be
absorbed with its full multiplicity:
$m_{\nu}\bigl(S_i|_{K+L}\bigr) = m_{\nu}(S_i)$. In particular, for
$\nu \in \spec(s_i) \setminus \{a, b\}$,
\[
(n-1)\, m_{\nu}(s_i) \;\leq\; \dim K + \dim L .
\]
Consequently, if $\dim K + \dim L < n - 1$, then
$\spec(s_i) \subseteq \{a, b\}$; if moreover $s_i$ is semisimple and is
not a scalar --- in particular, by Remark \ref{rem:twist}, whenever the
twist $C_i$ is nontrivial --- then both $a$ and $b$ occur in
$\spec(s_i)$.
\end{corollary}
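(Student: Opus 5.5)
The plan is to combine Corollary \ref{cor:quotient} with the invariant-subspace splitting of characteristic polynomials. Since $K+L$ is $S_i$-invariant, we have $\chi_{S_i} = \chi_{S_i|_{K+L}}\cdot \chi_{\rKLM(\sigma_i)}$. For each $\nu$, the algebraic multiplicity therefore splits additively:
\[
m_\nu(S_i) = m_\nu\bigl(S_i|_{K+L}\bigr) + m_\nu\bigl(\rKLM(\sigma_i)\bigr).
\]
If the eigenvalue set of the quotient operator lies in $\{a,b\}$, then $m_\nu(\rKLM(\sigma_i)) = 0$ for every $\nu\notin\{a,b\}$. This gives the full-absorption statement $m_\nu(S_i|_{K+L}) = m_\nu(S_i)$.

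For the inequality, take $\nu\in\spec(s_i)\setminus\{a,b\}$. By Theorem \ref{thm:charpoly}, $\chi_{S_i} = \chi_{s_i}^{\,n-1}\det(zI+s_ig_i)$, so $m_\nu(S_i) \ge (n-1)\,m_\nu(s_i)$, with any extra contribution coming from the factor $\det(zI+s_ig_i)$. On the other side, $m_\nu(S_i|_{K+L}) \le \dim(K+L) \le \dim K + \dim L$. Chaining these with the absorption equality yields
\[
(n-1)\,m_\nu(s_i) \le \dim K + \dim L .
\]
If $\dim K+\dim L<n-1$, any such $\nu$ would have $m_\nu(s_i) \ge 1$ and so would violate this inequality. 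Hence no such $\nu$ exists, and $\spec(s_i)\subseteq\{a,b\}$.

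For the final clause, assume $s_i$ is semisimple and not scalar. A semisimple operator with a single eigenvalue is scalar, so $s_i$ has at least two distinct eigenvalues. Since these lie in $\{a,b\}$, both $a$ and $b$ occur in $\spec(s_i)$. It remains to check that a nontrivial twist forces $s_i$ to be non-scalar. By Remark \ref{rem:twist}, $C_i=[s_i,g_i]$, and if $s_i$ were scalar this commutator would be $I$; so $C_i\neq I$ forces $s_i$ to be non-scalar.

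The main point requiring care is bookkeeping rather than any real obstacle. Multiplicities must be algebraic multiplicities over the algebraic closure. The bound on $m_\nu(S_i|_{K+L})$ should use $\dim(K+L)$, which is at most $\dim K+\dim L$ (and equals it when $\lambda\ne1$ by Lemma \ref{lem:KL}). Neither point needs more than the invariance of $K+L$ from Definition \ref{def:KLM}.
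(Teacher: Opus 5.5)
Your proof is correct and follows the argument the paper intends. The paper gives no separate proof and reads the corollary off Corollary \ref{cor:quotient} and Theorem \ref{thm:charpoly}, using additivity of algebraic multiplicities along the invariant subspace $K+L$ and the contribution $\chi_{s_i}^{\,n-1}$ to $\chi_{S_i}$. You also correctly note that only the inequality $\dim(K+L)\le\dim K+\dim L$ is needed, so Lemma \ref{lem:KL} is not required.
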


Finally, we settle semisimplicity at the intermediate level. Recall
from Remark \ref{rem:jordan} that the only obstruction is the coupling
in the triangular form \eqref{eq:triangular}.

\begin{proposition}[projection criterion]\label{prop:projection}
Suppose $s_i$ and $s_i g_i$ are semisimple, and for an operator $A$ let
$\Pi^{A}_{\nu}$ denote its spectral projection onto the eigenvalue
$\nu$. Then $\rLMl(\sigma_i)$ is semisimple if and only if
\[
\Pi^{s_i}_{\nu}\; \Pi^{-s_i g_i}_{\nu} = 0
\qquad \text{for every } \nu \in \spec(s_i) \cap
\bigl(-\spec(s_i g_i)\bigr).
\]
If this holds, then $\rKLM(\sigma_i)$ is semisimple for every
$\lambda \in \kk^{\times} \setminus \{1\}$. Whether the converse can
fail on the quotient --- that is, whether passing to the KLM quotient
can restore semisimplicity by absorbing a Jordan block into $K + L$ ---
is left open (see Section \ref{sec:discussion}).
\end{proposition}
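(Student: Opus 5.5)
The plan is to reduce everything to the local $2N\times 2N$ block and then use the block triangular form \eqref{eq:triangular}. By Definition \ref{def:LM}, $\rLMl(\sigma_i)$ is block diagonal, with $s_i$ in the blocks $k\neq i,i+1$ and with the block $B$ in rows and columns $i,i+1$. Since $s_i$ is semisimple, $\rLMl(\sigma_i)$ is semisimple if and only if $B$ is. By \eqref{eq:triangular}, $B$ is similar to
\[
M=\begin{pmatrix} A & C\\ O & D\end{pmatrix},\qquad A=s_i,\quad D=-s_ig_i,\quad C=s_i .
\]
So the proof comes down to a general lemma: for $A$ and $D$ semisimple with split spectra, $M$ is semisimple if and only if $\Pi^A_\nu C\,\Pi^D_\nu=0$ for every $\nu\in\spec(A)\cap\spec(D)$. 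With $C=s_i$, which commutes with $\Pi^{s_i}_\nu$, we get $\Pi^{s_i}_\nu s_i\Pi^{-s_ig_i}_\nu=\nu\,\Pi^{s_i}_\nu\Pi^{-s_ig_i}_\nu$. Here $\nu\neq0$ because $s_i$ is invertible, so the lemma condition is exactly the stated projection condition.

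To prove the lemma, I would first split off the non-resonant part of the coupling. Write $C=\sum_{\alpha,\beta}\Pi^A_\alpha C\,\Pi^D_\beta$ and let $C_0:=\sum_\nu \Pi^A_\nu C\,\Pi^D_\nu$ be the diagonal part. The Sylvester operator $X\mapsto AX-XD$ acts on the component $\Pi^A_\alpha(\cdot)\Pi^D_\beta$ as multiplication by $\alpha-\beta$. Hence it is invertible on the span of components with $\alpha\neq\beta$, and we can solve $AX-XD=C-C_0$. Conjugating $M$ by $\bigl(\begin{smallmatrix} I & X\\ O& I\end{smallmatrix}\bigr)$ then replaces $C$ by $C_0$; this is the Sylvester argument already indicated in Remark \ref{rem:jordan}.

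Next I would localize at each eigenvalue. The new matrix preserves each subspace $U_\nu:=\operatorname{im}\Pi^A_\nu\oplus\operatorname{im}\Pi^D_\nu$, and $V\oplus V$ is the direct sum of these. On $U_\nu$ the operator minus $\nu$ equals $\bigl(\begin{smallmatrix} O & C_0|\\ O & O\end{smallmatrix}\bigr)$, which has square zero. So it is semisimple (equivalently zero) if and only if $\Pi^A_\nu C\,\Pi^D_\nu=0$. When $\nu$ lies in only one of the two spectra, one summand of $U_\nu$ is zero and there is no condition. This step is the main obstacle. Care is needed that the spectral projections live in $\operatorname{End}(V)$ and that the splitting hypothesis on the minimal polynomials of $s_i$ and $s_ig_i$ makes all of them defined over $\kk$. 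I would handle this by working throughout with the spectral decompositions of $V$ under $A$ and under $D$ separately, viewing $C$ as a map from the $D$-decomposition to the $A$-decomposition.

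Finally, suppose $\rLMl(\sigma_i)$ is semisimple. Then its minimal polynomial is squarefree and splits. The induced operator $\rKLM(\sigma_i)$ on the quotient by the invariant subspace $K+L$ is annihilated by the same polynomial (cf.\ Remark \ref{rem:exact}), so it is semisimple as well, for every $\lambda\neq1$. No use of \eqref{eq:Lchar} is needed for this step; invariance of $K+L$ (Definition \ref{def:KLM}) suffices.
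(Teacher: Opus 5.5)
Your proposal is correct and follows the paper's proof: reduce via the triangular form \eqref{eq:triangular} to $M_i=\bigl(\begin{smallmatrix} A & C\\ O & D\end{smallmatrix}\bigr)$, remove the components $\Pi^A_\alpha C\,\Pi^D_\beta$ with $\alpha\neq\beta$ by a Sylvester conjugation, note that each resonant block is $\nu I$ plus a square-zero coupling, and pass to the quotient because the minimal polynomial of the induced operator divides that of $\rLMl(\sigma_i)$. Your decomposition into the invariant subspaces $U_\nu$ only spells out a step the paper states in one line; your conjugation works when read as $QMQ^{-1}$ with $Q=\bigl(\begin{smallmatrix} I & X\\ O & I\end{smallmatrix}\bigr)$.
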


\begin{proof}
By \eqref{eq:triangular}, $\rLMl(\sigma_i)$ is similar to
$s_i^{\oplus(n-2)} \oplus M_i$ with
$M_i = \bigl(\begin{smallmatrix} A & C \\ O & D \end{smallmatrix}\bigr)$,
$A = C = s_i$, $D = -s_i g_i$ (cf.\ \cite{Roth52}). Conjugating by
$P_Z = \bigl(\begin{smallmatrix} I & -Z \\ O & I \end{smallmatrix}\bigr)$
gives
$P_Z^{-1} M_i P_Z = \bigl(\begin{smallmatrix} A & C - AZ + ZD \\
O & D \end{smallmatrix}\bigr)$.
For eigenvalues $\alpha$ of $A$ and $\beta$ of $D$, write
$Z_{\alpha\beta} = \Pi^{A}_{\alpha} Z\, \Pi^{D}_{\beta}$ and
$C_{\alpha\beta} = \Pi^{A}_{\alpha} C\, \Pi^{D}_{\beta}$; the blocks
with $\alpha \neq \beta$ can be annihilated by the choice
$Z_{\alpha\beta} = (\alpha - \beta)^{-1} C_{\alpha\beta}$, leaving, for
each common eigenvalue $\nu$, the block
$\nu I + \bigl(\begin{smallmatrix} O & C_{\nu\nu} \\ O & O
\end{smallmatrix}\bigr)$. This block is semisimple if and only if
$C_{\nu\nu} = 0$, so $M_i$ is semisimple if and only if
$\Pi^{A}_{\nu} C\, \Pi^{D}_{\nu} = 0$ for every common eigenvalue
$\nu$. Since $A = C = s_i$ and $D = -s_i g_i$, one has
$\Pi^{A}_{\nu} C\, \Pi^{D}_{\nu}
= \nu\, \Pi^{s_i}_{\nu}\, \Pi^{-s_i g_i}_{\nu}$, and $\nu \neq 0$
since $s_i$ is invertible; this gives the displayed condition. Finally,
a quotient of a semisimple operator by an invariant subspace is
semisimple.
\end{proof}

\begin{remark}[division of labour]\label{rem:division}
In the formula of Theorem \ref{thm:generalcharpoly}, the numerator and
the pair factor $\det(z^2 I_{W_i} - (s_ig_is_i)|_{W_i})$ are determined
by the pair $(s_i, g_i)$ alone; the factors
$\chi_{s_i|_{W_k}}$ for $k \neq i, i+1$ and
$\chi_{s_i|_{E_{\lambda}}}$ involve the remaining $g_k$ and the full
product $g_1 \cdots g_n$, hence the tuple itself --- equivalently, in
the notation of Remark \ref{rem:twist}, the twists $C_j$. Thus the
\emph{candidate} Hecke parameters are constrained twist-independently
by Corollary \ref{cor:quotient}, while their attainment, and likewise
the signature of the invariant Hermitian form of \cite{Negami25} in the
unitary setting, depend on the additional data encoded by the tuple.
\end{remark}

\section{Discussion}\label{sec:discussion}

\subsection*{Summary: why the KLM quotient produces Hecke
representations}
The mechanism uncovered here can be summarized as follows, for the
semisimple scalar family of Theorem \ref{thm:classification}. The
eigenvalue $1$ of $g$ is one obstruction to a Hecke relation, and the
one that the construction disposes of structurally: it contributes the
eigenvalue pair $\{c, -c\}$ to the intermediate representation (Theorem
\ref{thm:charpoly}). It is neither the only obstruction --- two
distinct eigenvalues $\mu, \nu \notin \{1\}$ of $g$ already force a
minimal polynomial of degree three (Theorem
\ref{thm:classification}(iii)) --- nor one that every interesting input
carries, the rank-one Burau case $g = t \neq 1$ of Example
\ref{ex:burau} having no eigenvalue $1$ at all. The KLM construction
removes this obstruction
structurally rather than by hypothesis: the quotient by the invariant
subspace $K$ absorbs exactly these pairs, while $L$ removes, at
resonant parameters, a further copy of $\spec(s_i)$ (Lemma
\ref{lem:Laction}). Hecke algebra representations thus arise not
because the input avoids the eigenvalue $1$, but because the
construction itself disposes of its contribution --- which is why the
classification of Theorem \ref{thm:classification} is governed by
$\spec(g) \setminus \{1\}$ alone.

\subsection*{Iteration and Hecke-type input}
Theorem \ref{thm:generalcharpoly} and Proposition \ref{prop:projection}
reduce the Hecke question for arbitrary input to concrete linear
algebra: the candidate parameters are constrained twist-independently;
the parameter $\lambda$ can delete only eigenvalues of $s_i$, through
$L$ (Lemma \ref{lem:Laction}), so that for $n \geq 3$ it can never
create the two-distinct-root Hecke property in the scalar family
(Theorem \ref{thm:classification}(iii)), while for $n = 2$ resonance
can (Theorem \ref{thm:classification}(iv)); whether resonance can
absorb a Jordan obstruction on the quotient for general input remains
open. The remaining
obstruction is the semisimplicity coupling at the spectral overlaps
$\spec(s_i) \cap \bigl(-\spec(s_i g_i)\bigr)$, resolved by the
projection criterion at the intermediate level and by the containment
criterion of Remark \ref{rem:exact} on the quotient. By Corollary
\ref{cor:necessary}, in the twist-nontrivial regime $C_i \neq I$ (with
$s_i$ semisimple and $\dim K + \dim L < n - 1$) both Hecke parameters must already
occur in $\spec(s_i)$ (Corollary \ref{cor:necessary}), so the overlap
regime $\spec(s_i) \cap \bigl(-\spec(s_i g_i)\bigr) \neq \emptyset$ is
the typical one: the
input is then itself of Hecke type.

For input of Hecke type, $\spec(s_i) = \{a, b\}$, Corollary
\ref{cor:necessary} shows that, whenever $K + L$ is too small to absorb
entire eigenvalues, the spectral condition reduces to
$\spec(s_i g_i) \subseteq \{-a, -b\}$; the Hecke property itself is
then decided by the containment criterion of Theorem
\ref{thm:generalcharpoly}. An inheritance result of this flavor is
known in a different setting: Bigelow and Tian proved that their
Lawrence-type construction, whose input is a representation of $B_m$
satisfying a quadratic Hecke relation, produces an output satisfying the
same relation \cite[Theorem 5.4]{BigelowTian08}. The problem below asks
for the analogous classification within the KLM construction, starting
from semidirect-product input. Classifying the pairs $(s_i, g_i)$ with
this property, and determining whether the property can be sustained under
iteration of the KLM construction, appears to be a natural next step; we
note that for Burau-type input the list
$\spec(s_i)^{\uplus(n-1)} \uplus (-\spec(s_i g_i))$ of the intermediate
representation generically contains more than two values, and which of
them survive is governed by $K + L$. A classical benchmark for cubic
behaviour is the Lawrence--Krammer--Bigelow representation
\cite{Lawrence90, Krammer02, Bigelow01}, obtained from the
one-dimensional representation $\sigma_1 \mapsto t$ of $B_2$ by a
generalized Long--Moody construction
\cite[Theorem~5.3]{BigelowTian08}; after a rescaling and change of
parameters, Zinno identified it with the irreducible module of the
Birman--Murakami--Wenzl (BMW) algebra indexed by the one-row Young
diagram with $n - 2$ boxes \cite{Zinno01}, so that its standard
generators satisfy the corresponding cubic relation; the BMW algebra
was introduced independently by Birman--Wenzl and Murakami
\cite{BirmanWenzl89, Murakami87}.

\begin{problem}
Classify the representations $\rho \colon F_n \rtimes B_n \to \GL(V)$
with $\spec(s_i) = \{a,b\}$ and $\spec(s_i g_i) \subseteq \{-a,-b\}$ for
which $\rKLM(\sigma_i)$ is semisimple, and determine the composition
factors of the resulting $H_n$-modules. In particular, determine when the
Hecke property is preserved under iteration of the KLM construction.
\end{problem}

\subsection*{Unitarity and definiteness}
Combining Theorem \ref{thm:classification} with the unitarity results of
\cite{Negami25} produces Hecke algebra representations equipped with an
invariant Hermitian form of computable signature (Remark
\ref{rem:lambda}). Determining the parameters $(c, \mu, \lambda)$ for
which this form is definite is the specialization of
\cite[Problem 18]{Negami25} to the present family and connects, through
the unitary Hecke algebra representations, to the question of Birman and
Brendle \cite{BirmanBrendle05} on the origin of unitary braid group
representations, and to applications in topological quantum computation
\cite{NSSFD08, DRW16}.

\subsection*{Other actions}
By Remark \ref{rem:onlykey}, the triangularization \eqref{eq:triangular}
applies to any Long--Moody-type construction whose evaluated local
block has the form used in Section \ref{sec:charpoly} and satisfies
$g_{i+1} s_i = s_i g_i$; determining for which Wada-type actions
\cite{Wada92, Ito13, Soulie19} this occurs may be investigated
elsewhere, and the extension of the construction to virtual and welded
braid groups will be treated in forthcoming work of the author.

\section*{Acknowledgements}
The author would like to express her sincere gratitude to
Davide Dal Martello, Shigeo Koshitani, Yasuhide Numata and
Masahiko Yoshinaga for valuable discussions, and to Remy Adderton for
suggesting the connection with the Temperley--Lieb algebra, which
motivated Remark \ref{rem:TL}.
This work was supported by JST SPRING, Grant Number JPMJSP2109.

\end{document}